\newtheorem{theorem}{Theorem}[section]
\newtheorem{corollary}[theorem]{Corollary}
\newtheorem{definition}[theorem]{Definition}
\newtheorem{lemma}[theorem]{Lemma}
\newtheorem{proposition}[theorem]{Proposition}
\DeclareMathOperator{\area}{area}
\DeclareMathOperator{\conv}{conv}
\DeclareMathOperator{\vol}{vol}
\DeclareMathOperator{\orbit}{\mathcal{O}}
\DeclareMathOperator{\Stab}{Stab}
\DeclareMathOperator{\relint}{relint}
\newcommand{\be}{\begin{equation}}
\newcommand{\ee}{\end{equation}}
\newcommand{\bea}{\begin{eqnarray}}
\newcommand{\eea}{\end{eqnarray}}
\newcommand{\bc}{\begin{center}}
\newcommand{\ec}{\end{center}}
\newcommand{\ben}{\begin{enumerate}}
\newcommand{\een}{\end{enumerate}}
\newcommand{\bfi}{\begin{figure}}
\newcommand{\efi}{\end{figure}}
\title[Generalized permutahedra]{Generalized permutahedra: Minkowski linear functionals and Ehrhart positivity}
\date{}
\author{Katharina Jochemko}
\address{Department of Mathematics, %
KTH Royal Institute of Technology, %
Sweden}
\email{jochemko@kth.se}
\author{Mohan Ravichandran}
\address{Department of Mathematics, %
Bogazici University, %
Istanbul, Turkey}
\email{mohan.ravichandran@boun.edu.tr}
\keywords{Generalized permutahedra; Minkowski linear functionals; lattice polytopes; Ehrhart positivity}
\subjclass[2010]{05A15, 52B12 (primary); 30C10, 52B15, 52B20, 52B40, 52B45 (secondary)}
\begin{document}
\maketitle
\renewcommand{\arraystretch}{3.0}
\setlength{\unitlength}{1cm}

\begin{abstract}
We characterize all signed Minkowski sums that define generalized permutahedra, extending results of Ardila-Benedetti-Doker (2010). We use this characterization to give a complete classification of all positive, translation-invariant, symmetric Minkowski linear functionals on generalized permutahedra. We show that they form a simplicial cone and  explicitly describe their generators. We apply our results to prove that the linear coefficients of Ehrhart polynomials of generalized permutahedra, which include matroid polytopes, are non-negative, verifying conjectures of De Loera-Haws-K\"{o}ppe (2009) and Castillo-Liu (2018) in this case. We also apply this technique to give an example of a solid angle polynomial of a generalized permutahedron that has negative linear term and obtain inequalities for beta invariants of contractions of matroids. 
\end{abstract}

\section{Introduction}
Generalized permutahedra form a combinatorially rich class of polytopes that naturally appear in many areas of mathematics such as combinatorics, geometry, representation theory, optimization,  game theory and statistics (see, e.g.,~\cite{Ardila,CastilloFu,Danilov,fink2018schubert,Goemans,Lange,GraphicalModels,RankTests,Faces,Postnikov}). They contain a variety of interesting and significant classes of polytopes, in particular, matroid polytopes. Generalized permutahedra are sufficiently special to admit a thorough combinatorial description of their geometry as witnessed for instance by the discovery of Aguiar-Ardila of a Hopf monoid structure on generalized permutahedra~\cite{Hopfmonoid}, but also general enough to be widely applicable and to serve as useful test cases for questions in polyhedral combinatorics. In recent years, different groups of authors have explored generalizations of this class, leading to generalized nested permutahedra~\cite{nested} and generalized Coxeter permutahedra~\cite{ACEP20}. 

The name \textit{generalized permutahedra} was introduced by Postnikov in his pioneering work on the combinatorial aspects of this interesting class of polytopes~\cite{Postnikov}. It should however be noted that generalized permutahedra are equivalent to \textit{polymatroids}, a class of polyhedra that were introduced by Edmonds~\cite{Edmonds} in 1970 as polyhedral generalization of matroids. Since then polymatroids have been intensively studied in optimization, game theory and statistics due to their correspondence to submodular and supermodular functions (see~\cite{Polymatroids,M-convex,Shapley}.) For example, in game theory, well-studied objects are cooperative games, to each of which a polytope called the \emph{core} of the game is associated, see~\cite{Shapley2, StudenyKroupa}. Generalized permutahedra turn out to be exactly equal to cores of convex cooperative games~\cite{Kuipers}. In the theory of discrete convex analysis~\cite{M-convex} $M$-convex sets play a central role and there is a one to one correspondence between lattice points of generalized permutahedra and $M$-convex sets. For a thorough discussion of the equivalence of these concepts as well as connections to further areas such a conditional independence structures, we refer the reader to~\cite{StudenyKroupa}.

Recall that the (standard) permutahedron $\Pi _d\subset \mathbb{R}^d$ is the $(d-1)$-dimensional polytope 
\[
\Pi _d \ = \ \conv \{ (\sigma (1),\sigma (2),\ldots, \sigma (d))\colon \sigma \in S_d\} \subset \mathbb{R}^d
\]
where $S_d$ denotes the group of permutations on $[d]=\{1,2,\ldots,d\}$. There are many  equivalent  ways of defining generalized permutahedra, the most concise one being via Minkowski summands of the permutahedron. The \textbf{Minkowski sum} of two polytopes $P,Q\subset \mathbb{R}^d$ is the polytope defined as the vector sum 
\[P+Q=\{p+q \colon p\in P, q\in Q\}.\]

A polytope $R \subset \mathbb{R}^d$ is called a \textbf{Minkowski summand} of another polytope $Q \subset \mathbb{R}^d$ if there is a polytope $P \subset \mathbb{R}^d$ such that $P + R = Q$. We also call $R$ the \textbf{Minkowski difference} of $Q$ and $P$ and use the notation $R=Q-P$.  Further, the polytope $R$ is called a \textbf{weak Minkowski summand} of $Q$ if it is a Minkowski summand of a dilate $\lambda Q$ for some $\lambda>0$.

\begin{definition}\label{def:genperm}
A polytope $P \subset \mathbb{R}^d$ is called a \textbf{generalized permutahedron} if it is a weak Minkowski summand of the permutahedron $\Pi _d$.
\end{definition} 
In the following we denote the class of all generalized permutahedra in $\mathbb{R}^d$ by $\mathcal{P}_d$. In particular, every generalized permutahedron $P\in \mathcal{P}_d$ is a polytope of dimension at most $d-1$ and is contained in a hyperplane $\{\mathbf{x}\in \mathbb{R}^d\colon \sum _{i=1}^dx_i =\ell\}$ for some $\ell \in \mathbb{R}$.

In~\cite{Postnikov}, Postnikov studied the subclass of generalized permutahedra consisting of Minkowski sums of dilated standard simplices. Let $\Delta _\emptyset = \{0\}$ and for $\emptyset \neq I\subseteq [d]$ let
\[
\Delta _I \ = \ \conv \{e_i \colon i\in I\} \, 
\]
 be the \textbf{standard simplices} where  $e_1,\ldots, e_d$ are the standard basis vectors in $\mathbb{R}^d$. We will also use the notation $\Delta _i$ to denote the $(i-1)$-dimensional simplex $\Delta _{[i]}$ for all $1\leq i\leq d$. Extending \cite[Proposition 6.3]{Postnikov}, Ardila, Benedetti and Doker~\cite[Proposition 2.4]{Ardila} proved that every generalized permutahedron is a Minkowski difference of sums of dilated standard simplices and can be uniquely represented as a signed Minkowski sum $\sum _{I\subseteq[d]}y_I\Delta _I$. This representation was also considered in earlier works by Danilov and Koshevoy~\cite{Danilov} where it was used to describe cores of cooperative games. Here, a \textbf{signed Minkowski sum} is a formal linear combination with coefficients $y_I\in \mathbb{R}$ that describes a Minkowski difference: 
 
 \[\sum _{I\subseteq[d]}y_I\Delta _I = \sum _{I\subseteq[d], y_I \geq 0}y_I\Delta _I - \sum _{I\subseteq[d], y_I<0}(-y_I)\Delta _I.\]
 
 Not every set of coefficients $\{y_I\}_{I\subseteq [d]}$ defines a generalized permutahedron, though, as we will see, the set of all possible coefficients forms a polyhedral cone. In Theorem~\ref{thm:chargenperm} we give an explicit inequality description of this cone, thereby characterizing all coefficients $\{y_I\}_{I\subseteq [d]}$ that define generalized permutahedra. We moreover prove that this cone is equal to the cone of \textbf{supermodular functions}, up to a change of coordinates. Interestingly, Theorem~\ref{thm:chargenperm} has appeared in a very different context and language within game theory: it can be seen as a reincarnation of a result by Kuipers, Vermeulen and Voorneveld~\cite[Theorem 9]{Kuipers} who characterized all convex games given as a linear combination in the so-called unanimity basis. We offer a geometric proof of this result.

 We then use the characterization obtained in Theorem~\ref{thm:chargenperm} to investigate \textbf{Minkowski linear functionals} on generalized permutahedra. In Theorem~\ref{prop:positivecomb} and Proposition~\ref{prop:positivegeom} we explicitly describe the rays of the cone of positive Minkowski linear functionals and provide an explicit geometric construction of the ray functionals. We then consider Minkowski linear functionals that are \textbf{symmetric}, that is, invariant under permutations of the coordinates. Minkowski linear functionals are valuations and structural results on valuations under the action of a group have been a focal point of research in classical convex geometry ever since Hadwiger's seminal classification of continuous, rigid-motion invariant valuations on convex bodies~\cite{Hadwiger}. In Theorem~\ref{thm:symcomb} we provide a complete classification of all positive, translation-invariant, symmetric Minkowski linear functionals: they form a simplicial cone and we explicitly determine the rays of this cone. We then apply our results to Ehrhart polynomials of  generalized permutahedra that are also lattice polytopes.
 
 The \textbf{Ehrhart polynomial} of a lattice polytope counts the number of lattice points in integer dilates of the polytope~\cite{Ehrhart}. It is appealing to view Ehrhart polynomials as discrete analogues of the classical Minkowski volume polynomials of convex bodies~\cite{Bernstein,JochemkoSanyal,McMullen}, but unlike volume polynomials, the coefficients of Ehrhart polynomials need not be nonnegative. Understanding when we do have positivity is a fundamental question in Ehrhart theory (see, e.g., \cite{BeckRobins,negativeCoefficients}) and the study of \textbf{Ehrhart positive}~\cite{CastilloFu} polytopes, namely those that have only nonnegative coefficients is of current particular interest.

Known examples of Ehrhart positive polytopes include zonotopes~\cite{zonotopes} and integral cyclic polytopes~\cite{FuCyclic}. However, there are elementary examples of non-Ehrhart positive polytopes, the most classical being the Reeve tetrahedron~\cite{Reeve}. In recent work, it has been shown that order polytopes~\cite{negative0} and smooth polytopes~\cite{negative1} need not be Ehrhart positive. For a comprehensive survey on Ehrhart positivity see~\cite{liu2019positivity}.

In~\cite{CastilloFu} Castillo and Liu conjectured Ehrhart positivity for generalized permutahedra expanding on a conjecture of De Loera, Haws and Koeppe on matroid polytopes~\cite{DeLoera}. The conjecture was known to hold for all sums of standard simplices by an explicit combinatorial formula given in~\cite{Postnikov}. Ferroni~\cite{ferronihypersimplices} showed that hypersimplices, that is, matroid polytopes of uniform matroids, are Ehrhart positive. Using a valuation theoretic approach Castillo and Liu~\cite{CastilloFu} proved that generalized permutahedra are Ehrhart positive in up to six dimensions and moreover showed that the third and the fourth highest coefficient are nonnegative for generalized permutahedra of any dimension. However, despite this evidence, both of the aforementioned conjectures have very recently, while this article was under review, simultaneously been disproved by Ferroni~\cite{ferroni2021matroids} who was able to construct examples of matroid polytopes with negative quadratic coefficients for all ranks greater or equal to three.

On the other hand, in~\cite{CastilloThesis,CastilloFu} strong computational evidence was given that the linear coefficient is always nonnegative by explicit calculations for $d\leq 500$. Using the classification of positive, symmetric, translation-invariant Minkowski linear functionals obtained in Theorem~\ref{thm:symcomb} we are able to prove in Theorem~\ref{thm:Ehrhartpositive} that the linear coefficient is indeed always nonnegative. This has independently also been shown by Castillo and Liu~\cite{castillo2019todd} using different techniques from those developed in the present article. As an application, we then obtain an inequality among beta invariants of contractions of any given matroid in Corollary~\ref{cor:matroids} using a result of Ardila, Benedetti and Doker~\cite{Ardila}. Further, we prove that the aforementioned formula for the number of lattice points in sums of standard simplices provided in~\cite{Postnikov} extends to arbitrary generalized permutahedra (Corollary~\ref{cor:extpostnikov}). We conclude by applying our results to solid-angle polynomials and show the existence of a three dimensional generalized permutahedron whose solid-angle polynomial has negative linear term.

\section{Signed Minkowski sums}

In the following we assume familiarity with the basics of polyhedral geometry and lattice polytopes. For further reading we recommend~\cite{BeckRobins,Grunbaum,Ziegler}.

Let $P_1,\ldots,P_m$ be polytopes. A \textbf{signed Minkowski sum} is a formal sum $\sum _i y_i P_i$ with real coefficients $y_1,\ldots, y_m$. We say that $\sum _i y_i P_i$ defines a polytope if $P=\sum _{i\colon y_i<0} (-y_i)P_i$ is a Minkowski summand of $Q=\sum _{i\colon y_i\geq 0} y_iP_i$, in which case $\sum _i y_i P_i$ represents the Minkowski difference $Q-P$. In~\cite{Ardila}, Ardila, Benedetti and Doker showed that every generalized permutahedron has a unique expression as a signed Minkowski sum of standard simplices. This decomposition was also considered in earlier works by Danilov and Koshevoy~\cite{Danilov} where it was used to describe cores of cooperative games.
\begin{proposition}[{\cite[Proposition 2.4]{Ardila}}]\label{prop:Ardila}
For every generalized permutahedron $P\in \mathcal{P}_d$ there are uniquely determined real numbers $y_I$ for all $\emptyset \neq I\subseteq [d]$ and $y_\emptyset=0$ such that 
\[
P \ = \ \sum_{\emptyset \neq I\subseteq [d]} y_I \Delta _I \, .
\]
\end{proposition}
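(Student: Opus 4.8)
The plan is to linearize the problem via support functions evaluated on the rays of the braid fan, and then reduce it to an elementary change of basis. To a polytope $A\subseteq\mathbb{R}^d$ lying in a hyperplane $\{\mathbf{x}\in\mathbb{R}^d\colon\sum_i x_i=\ell\}$ I would attach the set function $z_A$ with $z_A(I)=\max_{\mathbf{x}\in A}\sum_{i\in I}x_i$ for $\emptyset\neq I\subseteq[d]$ and $z_A(\emptyset)=0$. Four standard facts carry the argument. First, support functions are additive under Minkowski sums, so the map $A\mapsto z_A$ satisfies $z_{A+B}=z_A+z_B$ for all polytopes. Second, $\mathcal{P}_d$ is closed under Minkowski sums and nonnegative dilations, since a Minkowski sum of weak Minkowski summands of $\Pi_d$ is again one. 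Third, the normal fan of any $P\in\mathcal{P}_d$ coarsens the braid fan $\mathcal{N}(\Pi_d)$, whose rays are spanned by the vectors $\sum_{i\in I}e_i$ as $I$ ranges over the proper nonempty subsets of $[d]$; as the support function of $P$ is linear on each maximal cone of a refinement of $\mathcal{N}(P)$, and in particular on each braid cone, it is determined by its values on those rays together with $z_P([d])$, so $z_A$ determines $A$ within $\mathcal{P}_d$. Fourth, a one-line computation gives $z_{\Delta_I}(J)=1$ if $I\cap J\neq\emptyset$ and $0$ otherwise.

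With this dictionary in hand I would reduce the statement to linear algebra. A formal sum $\sum_{\emptyset\neq I\subseteq[d]}y_I\Delta_I$ defines the polytope $P$ precisely when $Q:=\sum_{y_I>0}y_I\Delta_I$ and $R:=\sum_{y_I<0}(-y_I)\Delta_I$ satisfy $P+R=Q$; applying $A\mapsto z_A$ and using additivity, this forces $z_P=z_Q-z_R=\sum_I y_I z_{\Delta_I}$. Conversely, given any real solution $\{y_I\}$ of $z_P=\sum_I y_I z_{\Delta_I}$, the polytopes $Q$ and $R$ built as above lie in $\mathcal{P}_d$, hence so does $P+R$; since $z_{P+R}=z_P+z_R=z_Q$, the third fact yields $P+R=Q$, so the formal sum does define $P$, and it represents $Q-R=P$. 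Thus existence and uniqueness of the claimed representation are equivalent to existence and uniqueness of the solution of the system
\[
z_P(S)\ =\ \sum_{\emptyset\neq I\subseteq[d]\,:\,I\cap S\neq\emptyset}y_I\qquad\text{for all }S\subseteq[d],
\]
with the normalization $y_\emptyset=0$, which is forced since $\Delta_\emptyset=\{0\}$ is the Minkowski identity and contributes nothing.

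Finally I would solve this system. Complementing, it is equivalent to $g(T)=\sum_{\emptyset\neq I\subseteq T}y_I$ for all $T\subseteq[d]$, where $g(T):=z_P([d])-z_P([d]\setminus T)$; here $g(\emptyset)=0$ and the assignment $z_P\mapsto g$ is an involution on the space of set functions vanishing at $\emptyset$, hence a bijection. Möbius inversion on the Boolean lattice then gives the unique solution $y_I=\sum_{J\subseteq I}(-1)^{|I\setminus J|}g(J)$, with $y_\emptyset=g(\emptyset)=0$. Equivalently, one checks by inclusion–exclusion that $z_{\Delta_I}=\sum_{\emptyset\neq K\subseteq I}(-1)^{|K|+1}u_K$, where $u_K(J)=1$ if $K\subseteq J$ and $0$ otherwise is the unanimity set function; triangularity of this change of coordinates with respect to inclusion shows that the $2^d-1$ functions $z_{\Delta_I}$ form a basis of the $(2^d-1)$-dimensional space of set functions on $[d]$ vanishing at $\emptyset$, which is exactly what is needed. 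I expect the genuinely delicate point to be the reduction in the middle paragraph — the compatibility of signed Minkowski sums with this linearization — which relies on closure of $\mathcal{P}_d$ under Minkowski sums together with the fact that a generalized permutahedron is pinned down by its support values on the braid rays; once that is secured, the combinatorial core is the elementary basis statement above.
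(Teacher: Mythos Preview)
Your argument is correct. The paper does not actually supply its own proof of this proposition: it is quoted from \cite{Ardila} and used as input. That said, your route is precisely the one implicit in the paper's surrounding framework. The paper records the second, more explicit form of the same proposition, namely $y_I=\sum_{J\subseteq I}(-1)^{|I|-|J|}z_J$ where the $z_I$ are the tight right-hand sides in the inequality description $P(\{z_I\})$, and in the second proof of Theorem~\ref{thm:chargenperm} it spells out exactly the M\"obius inversion $z_I\leftrightarrow y_I$ that you carry out. Your $z_P(S)=\max_{\mathbf x\in P}\sum_{i\in S}x_i$ is the support function rather than the paper's lower-bound parameter $z_I=\min_{\mathbf x\in P}\sum_{i\in I}x_i$; your complementation $g(T)=z_P([d])-z_P([d]\setminus T)$ is precisely the dictionary between the two conventions, after which the M\"obius step and the final formula coincide verbatim with the paper's.

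One small remark on the reduction you flag as the delicate point: your backward implication uses that $P+R$ and $Q$ are both generalized permutahedra and share the same support values on the braid rays, hence coincide. You correctly note that $\mathcal P_d$ is closed under Minkowski sums; it is worth saying explicitly that $P\in\mathcal P_d$ by hypothesis and $R\in\mathcal P_d$ as a nonnegative combination of standard simplices, so $P+R\in\mathcal P_d$ and your ``third fact'' applies. With that made explicit the argument is complete and matches the paper's intended mechanism.
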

Equivalently, $\sum _{I\colon y_I<0} (-y_I) \Delta _I$ is a Minkowski summand of $\sum _{I\colon y_I\geq 0} y_I \Delta _I$ and 
\begin{equation}\label{eq:equivalent}
    P \ + \ \sum _{I\colon y_I<0} (-y_I) \Delta _I \ = \ \sum _{I\colon y_I\geq 0} y_I \Delta _I \, .
\end{equation}
Not every choice of coefficients $\{y_I\}_{I\subseteq [d]}$ yields a generalized permutahedron. The goal of this section is to complete the picture and to give a complete characterization of all coefficients $\{y_I\}_{I\subseteq [d]}$ for which $\sum _{I\subseteq [d]}y_I\Delta _I$ defines a generalized permutahedron.

By a result of Shephard, Minkowski summands of polytopes can be characterized in terms of their edge directions and edge lengths (see~\cite[p. 318]{Grunbaum}). For any polytope $P\subset \mathbb{R}^d$ and any direction $u\in \mathbb{R}^d\setminus \{0\}$ let
\[
P^u = \{x \in P \mid u^{T}x = \operatorname{max}_{y \in P}\,u^{T}y\} \, 
\]
be the \textbf{face of $P$ in direction of $u$}.
\begin{theorem}[{\cite[p. 318]{Grunbaum}}]\label{thm:Minkowskisummands}
Let $P,Q\subset\mathbb{R}^d$ be polytopes. Then $P$ is a Minkowski summand of $Q$ if and only if the following two conditions hold for all $u\in \mathbb{R}^d\setminus \{0\}$.
\begin{itemize}
    \item[(i)] If $Q^u$ is a vertex then so is $P^u$.
    \item[(ii)] If $Q^u=[p,q]$ is an edge with endpoints $p$ and $q$ then up to translation, $P^u=\lambda \,[p,q]$ for some $0\leq \lambda \leq 1$.
\end{itemize}
\end{theorem}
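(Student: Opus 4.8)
The plan is to prove the two implications separately; the forward direction is essentially formal, and the reverse direction is where the work lies. Suppose first that $Q = P + R$ for some polytope $R\subset\mathbb{R}^d$. The only ingredient needed is the elementary fact that the face of a Minkowski sum in a given direction is the Minkowski sum of the faces of the summands in that direction: $Q^u = P^u + R^u$ for every $u\in\mathbb{R}^d\setminus\{0\}$. If $Q^u$ is a vertex, then $P^u + R^u$ is a single point, which forces both $P^u$ and $R^u$ to be points; this is (i). If $Q^u=[p,q]$ is an edge, then $P^u + R^u$ is one-dimensional, so $\dim P^u,\dim R^u\le 1$; since the Minkowski sum of two segments with distinct directions is a parallelogram, $P^u$ and $R^u$ must be (possibly degenerate) segments parallel to $[p,q]$ whose lengths add up to that of $[p,q]$. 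Hence $P^u$ is a translate of $\lambda[p,q]$ with $0\le\lambda\le 1$, which is (ii).

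For the converse, assume (i) and (ii); the goal is to construct a polytope $R$ with $P+R=Q$. First, (i) implies that the normal fan $\mathcal{N}(Q)$ refines $\mathcal{N}(P)$: on the interior of a maximal cone $\sigma_v$ of $\mathcal{N}(Q)$ (corresponding to a vertex $v$ of $Q$) the face $P^u$ is always a vertex by (i), and it must be the \emph{same} vertex throughout, since a point of $\operatorname{int}(\sigma_v)$ lying on a wall of $\mathcal{N}(P)$ would give a direction $u$ with $Q^u$ a vertex but $P^u$ an edge, contradicting (i); write $\pi(v)$ for this common vertex of $P$. Consequently $h_P$ is linear on each maximal cone of $\mathcal{N}(Q)$. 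Next, for every edge $e=[v,v']$ of $Q$, choosing $u$ in the relative interior of the wall of $\mathcal{N}(Q)$ between $\sigma_v$ and $\sigma_{v'}$, condition (ii) says $P^u$ is a translate of $\lambda_e e$ with $0\le\lambda_e\le 1$; taking limits from $\sigma_v$ and from $\sigma_{v'}$ shows that the vertices of $P^u$ lie among $\{\pi(v),\pi(v')\}$, whence $\pi(v')-\pi(v)=\lambda_e(v'-v)$.

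Now set $\rho(v):=v-\pi(v)$ for each vertex $v$ of $Q$, let $R:=\operatorname{conv}\{\rho(v):v\text{ a vertex of }Q\}$, and consider $g:=h_Q-h_P$, which is piecewise linear with respect to $\mathcal{N}(Q)$ and equals $u\mapsto\langle u,\rho(v)\rangle$ on $\sigma_v$. It suffices to show that $g$ is convex, hence that $g=h_R$; then $h_R+h_P=h_Q$, so $R+P=Q$ and $P$ is a Minkowski summand of $Q$. Convexity of $g$ is checked across the individual codimension-one walls of $\mathcal{N}(Q)$: across the wall between $\sigma_v$ and $\sigma_{v'}$ one needs $\langle u,\rho(v')-\rho(v)\rangle\ge 0$ for all $u\in\sigma_{v'}$; but $\rho(v')-\rho(v)=(v'-v)-(\pi(v')-\pi(v))=(1-\lambda_e)(v'-v)$, and $u\in\sigma_{v'}$ means $v'$ maximizes $\langle u,\cdot\rangle$ over $Q$, so $\langle u,v'-v\rangle\ge 0$; together with $1-\lambda_e\ge 0$ from (ii), this is exactly the wall inequality. (One can also see $g$ is convex directly by walking along a generic segment toward a given $u$ and summing the nonnegative contributions $(1-\lambda_{e_i})\langle u,v_i-v_{i-1}\rangle$ picked up at each wall crossing.)

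The main obstacle is the reverse direction, and within it the one step that is not routine bookkeeping is the passage from ``convex across every codimension-one wall of $\mathcal{N}(Q)$'' to ``globally convex,'' i.e.\ genuinely a support function: this is the description of the deformation (type) cone of $Q$ by its wall-crossing inequalities and must be invoked or proved, the point being that $\mathcal{N}(Q)$ is the normal fan of an actual polytope so that the $2$-faces of $Q$ supply exactly the relations that make local convexity global (for an arbitrary complete fan this fails). A minor additional point is that $Q$ need not be full-dimensional, so one works modulo the lineality space of $\mathcal{N}(Q)$; this does not affect any of the arguments above.
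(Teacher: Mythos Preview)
The paper does not prove this theorem; it is quoted from Gr\"unbaum's book and used as a black box in the proof of Theorem~\ref{thm:chargenperm}. There is therefore no proof in the paper to compare your argument against.

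That said, your argument is essentially correct and follows the standard support-function route. The forward direction is immediate from $Q^u = P^u + R^u$. For the converse, your construction of $\rho(v) = v - \pi(v)$ and the reduction to checking wall-crossing convexity of $g = h_Q - h_P$ is the right strategy, and you have correctly isolated the one genuinely nontrivial step: that convexity across each codimension-one wall of a \emph{polytopal} complete fan implies global convexity of a piecewise linear function. This is true (it is, in effect, the description of the deformation cone of $Q$ by its edge inequalities), and your remark about the $2$-faces of $Q$ supplying the necessary relations is the correct intuition, but it requires either a proof or a precise citation rather than just a mention; your alternative ``walk along a generic segment'' sketch is closer to an actual argument and could be made rigorous. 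One small point worth making explicit: in deducing $\pi(v')-\pi(v) = \lambda_e(v'-v)$ with the correct sign (and not $-\lambda_e$), you are using that perturbing $u$ from the wall into $\sigma_{v'}$ selects $\pi(v')$ as the maximizing vertex of $P$, in parallel with it selecting $v'$ for $Q$; this follows from continuity of $h_P$ but deserves a word.
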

From Theorem~\ref{thm:Minkowskisummands} it follows that the possible edge directions of a Minkowski summand $P$ of $Q$ are given by the edge directions of $Q$. Since the permutahedron $\Pi_d$ equals, up to translation, the Minkowski sum over all line segments $[e_i,e_j]$, $i\neq j$ (See, e.g., \cite[Exercises 4.63 and 4.64]{Stanley}), all edge directions of $\Pi _d$ are of the form $e_i-e_j$ for $i\neq j$. This property characterizes generalized permutahedra  as shown by Proposition 2.6 in \cite{ACEP20}, specialized to the permutohedron.

\begin{theorem}[{\cite[Proposition 2.6]{ACEP20}}]\label{thm:edgedirections}
A polytope is a generalized permutahedron if and only if all edge directions are of the form $e_i-e_j$ for $i\neq j$.
\end{theorem}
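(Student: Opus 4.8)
Both implications will follow from Shephard's criterion (Theorem~\ref{thm:Minkowskisummands}), applied to dilates of $\Pi_d$, together with two facts about the permutahedron recorded above: its edges are exactly the segments parallel to the vectors $e_i-e_j$ with $i\neq j$, and its normal fan is the braid fan. Spelling out the latter: $\Pi_d^u$ is a vertex exactly when $u$ has pairwise distinct coordinates, and is an edge parallel to $e_i-e_j$ exactly when $u_i=u_j$ while all remaining coordinates are distinct.

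\textbf{Generalized permutahedron $\Rightarrow$ braid edge directions.} If $P\in\mathcal P_d$ then, by Definition~\ref{def:genperm}, $P$ is a Minkowski summand of $\lambda\Pi_d$ for some $\lambda>0$. As observed after Theorem~\ref{thm:Minkowskisummands}, the edge directions of a Minkowski summand occur among the edge directions of the larger polytope; since $\lambda\Pi_d$ has the same edge directions as $\Pi_d$, every edge of $P$ is parallel to some $e_i-e_j$.

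\textbf{Braid edge directions $\Rightarrow$ generalized permutahedron.} Conversely, suppose every edge of $P$ is parallel to some $e_i-e_j$. If $P$ is a single point it is trivially a generalized permutahedron, so assume $P$ has at least one edge and let $L$ be the largest Euclidean edge length of $P$; fix $\lambda\ge L/\sqrt2$. I will check conditions (i) and (ii) of Theorem~\ref{thm:Minkowskisummands} for the pair $P$, $Q=\lambda\Pi_d$; this shows that $P$ is a Minkowski summand of $\lambda\Pi_d$, hence a weak Minkowski summand of $\Pi_d$, hence a generalized permutahedron. Let $u\neq0$. If $Q^u$ is a vertex, then the coordinates of $u$ are pairwise distinct; were $P^u$ not a vertex, it would contain an edge of $P$, which is parallel to some $e_k-e_l$ and hence orthogonal to $u$, forcing $u_k=u_l$ --- a contradiction. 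So $P^u$ is a vertex and (i) holds. If $Q^u$ is an edge, then $u_i=u_j$ for a unique pair $i\neq j$ and $Q^u$ is a translate of $[0,\lambda(e_i-e_j)]$; every edge of the face $P^u$ is an edge of $P$, so is parallel to some $e_k-e_l$ with $u_k=u_l$, that is, parallel to $e_i-e_j$. Since any polytope of dimension at least two contains a $2$-dimensional face, which is a polygon and hence has edges in more than one direction, the face $P^u$ --- all of whose edges are parallel to $e_i-e_j$ --- is a point or a segment parallel to $e_i-e_j$, of Euclidean length at most $L\le\sqrt2\,\lambda$. Hence, up to translation, $P^u=\mu\,Q^u$ for some $0\le\mu\le1$, which is (ii).

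\textbf{Main obstacle.} The forward direction is essentially immediate from what has already been set up; the content lies in the converse, where the two things to get right are (a) a sufficiently precise description of which faces $\Pi_d^u$ are vertices and which are edges --- that is, a concrete grip on the braid fan --- and (b) the elementary lemma that a polytope whose edges are all parallel to a fixed vector is a point or a segment. A minor but genuine subtlety is that one fixed dilation factor $\lambda$ must satisfy condition (ii) for every direction $u$ simultaneously, which is why we take $\lambda$ at least as large as the longest edge of $P$ (in the appropriate units). Alternatively, the converse can be run through normal fans: the hypothesis says every wall of $\mathcal N(P)$ lies on a braid hyperplane $x_i=x_j$, from which one deduces that the braid fan refines $\mathcal N(P)$, equivalently that $P$ is a weak Minkowski summand of $\Pi_d$.
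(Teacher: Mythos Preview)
The paper does not actually prove this theorem: it is stated with a citation to \cite[Proposition 2.6]{ACEP20} and used as a black box, with no argument given in the text. So there is no ``paper's own proof'' to compare against.

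That said, your proof is correct and self-contained, and it fits naturally into the paper's framework since it only uses tools the paper has already set up (Definition~\ref{def:genperm}, Shephard's criterion in Theorem~\ref{thm:Minkowskisummands}, and the decomposition $\Pi_d = \sum_{i<j}[e_i,e_j]$ up to translation). The forward direction is immediate from the remark following Theorem~\ref{thm:Minkowskisummands}. For the converse, your two ingredients --- that $\Pi_d^u$ is a vertex precisely when the coordinates of $u$ are distinct, and an edge parallel to $e_i-e_j$ precisely when $u_i=u_j$ is the unique coincidence --- follow directly from $\Pi_d^u=\sum_{i<j}[e_i,e_j]^u$, so they are justified by what the paper records. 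The dimension argument (a polytope all of whose edges are parallel to a single direction is a point or a segment) and the choice $\lambda\ge L/\sqrt2$ to make the edge-length condition in (ii) hold uniformly are both handled correctly. The alternative normal-fan phrasing you sketch at the end is closer in spirit to how \cite{ACEP20} treats the general Coxeter case, but the Shephard-criterion argument you give is arguably the more natural one here given the paper's setup.
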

The following theorem characterizes all signed Minkowski sums that define generalized permutahedra.
It was brought to the authors' attention by the anonymous referee that this theorem has appeared before in a different language in the game theory literature in an article by Kuipers-Vermuelen-Voorneveld~\cite{Kuipers}. There it yields a characterization of the class of convex games in terms of the unanimity basis introduced by Shapley in~\cite{Shapley2}. We offer two proofs: the second one, via supermodular functions, is similar in nature to the one given in~\cite{Kuipers}. Nevertheless, for reasons of completeness and to highlight the connection to supermodular functions, we have chosen to include it. Our first proof, in contrast, is, up to our knowledge, new and rather different in spirit, and offers a geometric perspective on this result.

In the following let ${[d]\choose 2}$ denote the set of all subsets of $[d]$ with $2$ elements.

\begin{theorem}\label{thm:chargenperm}
Let $\{y_I\}_{I\subseteq [d]}$ be a vector of real numbers. Then the following are equivalent.
\begin{itemize}
    \item[(i)] The signed Minkowski sum $\sum_{I \subseteq [d]} y_I \Delta_I$ defines a generalized permutahedron in $\mathcal{P}_d$.
    \item[(ii)] For all $2$-element subset $E\in {[d]\choose 2}$ and all $T\subseteq [d]$ such that $E\subseteq T$
    \begin{equation}\label{eq:supermodtrans}
    \sum _{E\subseteq I\subseteq T}y_I  \geq 0 \, .
    \end{equation}
In particular, the collection of all coefficients $\{y_I\}_{I\subseteq [d]}$ such that $\sum_{I \subseteq [d]} y_I \Delta_I$ defines a generalized permutahedron is a polyhedral cone. The inequalities \eqref{eq:supermodtrans} are facet-defining.
\end{itemize}
\end{theorem}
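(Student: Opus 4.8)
The plan is to use Shephard's characterization of Minkowski summands (Theorem~\ref{thm:Minkowskisummands}) together with the edge-direction characterization of generalized permutahedra (Theorem~\ref{thm:edgedirections}) to translate membership in $\mathcal{P}_d$ into a purely combinatorial condition on the faces of the simplices $\Delta_I$ in the various edge directions $e_i - e_j$. Since every $\Delta_I$ is a generalized permutahedron, and since the class $\mathcal{P}_d$ is closed under Minkowski sums, the only issue is the \emph{difference} operation: writing $Q = \sum_{y_I \geq 0} y_I \Delta_I$ and $P = \sum_{y_I < 0} (-y_I) \Delta_I$, condition (i) is equivalent by Proposition~\ref{prop:Ardila} and \eqref{eq:equivalent} to $P$ being a Minkowski summand of $Q$. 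The first step is therefore to compute, for a fixed direction $u = e_i - e_j$ (and more generally for any $u$), the face $\Delta_I^u$: it is again a standard simplex $\Delta_{I'}$ (up to translation) where $I' \subseteq I$ is the subset on which the linear functional $u$ is maximized over $I$, which one sees explicitly. Hence for $u = e_i - e_j$ the face $\Delta_I^{e_i-e_j}$ is: the whole segment $[e_i,e_j]$-direction edge when $i,j \in I$; the vertex $e_i$ when $i \in I, j \notin I$; and $\Delta_{I \setminus \{j\}}$ translated otherwise — so the "length in direction $e_i-e_j$'' of $\Delta_I$ is $1$ if $\{i,j\} \subseteq I$ and $0$ otherwise.

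Given this, the second step is to sum edge lengths: the edge length of $Q$ in direction $e_i - e_j$ is $\sum_{\{i,j\} \subseteq I, y_I \geq 0} y_I$ and that of $P$ is $\sum_{\{i,j\} \subseteq I, y_I < 0} (-y_I)$. Shephard's condition (ii) requires $P$'s edge length to be at most $Q$'s (and condition (i), the vertex condition, needs to be checked but follows from the same bookkeeping once one knows $Q^u$ and $P^u$ are translates of dilated standard simplices $\Delta_{I'}$ with the same $I'$). The inequality "length of $P \leq$ length of $Q$ in direction $e_i-e_j$'' is exactly $\sum_{\{i,j\} \subseteq I} y_I \geq 0$ — but this is only the special case $T = [d]$ of \eqref{eq:supermodtrans}. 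To get the full family indexed by $T$ one must examine \emph{all} directions $u$, not just $u = e_i - e_j$: for a generic $u$ that induces on $[d]$ a chain of level sets, the face $Q^u$ (resp.\ $P^u$) is itself a generalized permutahedron supported on a "flag'' of coordinate subsets, and applying the edge condition inside that face — i.e.\ iterating the argument on the faces — produces the inequalities where the outer set ranges over an arbitrary $T$ and the inner set over $E \subseteq I \subseteq T$. Concretely, choosing $u$ so that $u$ is constant and maximal on $T^c$... (more precisely, so that the face in direction $u$ is the generalized permutahedron $\sum_I y_I \Delta_{I \cap T}$, obtained by "projecting away'' the coordinates outside $T$) reduces the general case to the case $T = [d]$ already handled, applied to the smaller ground set $T$. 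This is the step I expect to require the most care: correctly identifying which face of the signed sum is cut out by a given $u$, checking that taking the face commutes with the signed Minkowski sum (faces of a Minkowski sum are Minkowski sums of faces), and verifying the vertex condition (i) of Theorem~\ref{thm:Minkowskisummands} holds automatically whenever all the edge-length inequalities \eqref{eq:supermodtrans} hold — so that edges really do control everything.

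For the converse direction — that \eqref{eq:supermodtrans} implies (i) — the same analysis runs in reverse: assuming all the inequalities hold, one checks conditions (i) and (ii) of Theorem~\ref{thm:Minkowskisummands} for $P$ as a summand of $Q$ in every direction $u$ by decomposing $u$ according to its level-set flag and reducing to the computed face data, and then invokes Theorem~\ref{thm:Minkowskisummands} to conclude $Q - P \in \mathcal{P}_d$; alternatively one notes directly via Theorem~\ref{thm:edgedirections} that $Q-P$ has only edge directions $e_i - e_j$.

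Finally, for the "in particular'' claims: the set of valid $\{y_I\}$ is cut out by the finitely many linear inequalities \eqref{eq:supermodtrans}, hence is a polyhedral cone. For facet-definedness, I would exhibit, for each pair $(E,T)$, a vector $\{y_I\}$ satisfying all inequalities \eqref{eq:supermodtrans} with equality \emph{only} for $(E,T)$ — a natural candidate is a small perturbation of the generalized permutahedron $\Pi_{d}$ (whose own signed-simplex coordinates are known and lie strictly inside the cone) in the direction dual to the $(E,T)$ inequality; checking that this perturbation keeps all other inequalities strict and is feasible. Counting: the number of inequalities is $\sum_{k \geq 2} \binom{d}{k}(2^{k-2})$ after reindexing by $|T|$, which should match the dimension count making the cone full-dimensional in the relevant coordinate space, confirming each \eqref{eq:supermodtrans} is a genuine facet rather than redundant. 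The main obstacle throughout is the face-computation bookkeeping in step two; everything else is linear algebra and careful indexing.
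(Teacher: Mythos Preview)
Your overall strategy---rewrite (i) as ``$P := \sum_{y_I<0}(-y_I)\Delta_I$ is a Minkowski summand of $Q := \sum_{y_I\geq 0} y_I\Delta_I$'' and apply Shephard's criterion (Theorem~\ref{thm:Minkowskisummands})---is exactly the paper's first proof. The gap is in your choice of the direction $u$. You take $u = e_i - e_j$ and assert that $\Delta_I^{e_i-e_j} = [e_i,e_j]$ when $\{i,j\}\subseteq I$; this is false. For $u = e_i - e_j$ one has $u_i = 1 > u_k = 0 > u_j = -1$, so whenever $i \in I$ the face $\Delta_I^u$ is the single vertex $e_i$. You have confused the edge \emph{vector} $e_i-e_j$ with a linear functional that \emph{selects} that edge as a face. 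Worse, when $i \notin I$ and $|I\setminus\{j\}|\geq 2$ the face $\Delta_I^u$ is higher-dimensional, so $Q^u$ is generally not an edge and Shephard's condition~(ii) does not even apply. Your fallback---take $u$ constant and maximal on $T^c$ so that the face becomes $\sum_I y_I \Delta_{I\cap T}$---is also incorrect: for $I$ meeting $T^c$ the face of $\Delta_I$ is $\Delta_{I\cap T^c}$, not $\Delta_{I\cap T}$, so no such clean restriction to the ground set $T$ arises.

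The paper's fix is to choose, for each pair $E=\{i,j\}\subseteq T$, a vector $u$ with $u_i=u_j$, all other coordinates pairwise distinct, and $\min_{k\notin T} u_k > u_i=u_j > \max_{k\in T\setminus E} u_k$. Then $\Delta_I^u = [e_i,e_j]$ precisely when $E\subseteq I\subseteq T$, and is a vertex otherwise; hence $P^u$ and $Q^u$ are, up to translation, $[e_i,e_j]$ scaled by $\sum_{E\subseteq I\subseteq T}\alpha_I$ and $\sum_{E\subseteq I\subseteq T}\beta_I$ respectively, and Shephard yields \eqref{eq:supermodtrans} in one step---no two-stage reduction is needed. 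The converse is also more delicate than ``runs in reverse'': for an arbitrary $u$ with $Q^u$ an edge parallel to $e_i-e_j$, there may be further pairs $\{k,l\}$ with $u_k=u_l$, and one must invoke the inequalities \eqref{eq:supermodtrans} for \emph{those} pairs (together with $\beta_J=0$ for the relevant $J$) to rule out $P^u$ acquiring a component in direction $e_k-e_l$; your sketch does not address this. Finally, the paper obtains facet-definingness from the dual-cone description in Theorem~\ref{prop:positivecomb} (showing the $v_E^T$ are extreme rays), not by a perturbation argument.
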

\begin{proof}
Let $\alpha _I=-\min \{y_I,0\}$ and $\beta _I = \max \{y_I,0\}$ and let $P=\sum _{I} \alpha_I \Delta _I$ and $Q=\sum_{I} \beta_I \Delta _I$. Then, by \eqref{eq:equivalent}, we need to show that $P$ is a Minkowski summand of $Q$ if and only if
\begin{equation}\label{eq:inequality}
\sum _{E\subseteq I\subseteq T}\alpha _I \ \leq \ \sum _{E\subseteq I\subseteq T}\beta _I
\end{equation}
for all $2$-element subsets $E$ of $[d]$ and all $T\subseteq [d]$ such that $E\subseteq T$. 

We first prove the necessity of the inequality. Let $E=\{i, j\}$ and let $T\supseteq E$. Let $u\in \mathbb{R}^d\setminus \{0\}$ be a vector such that
\begin{itemize}
    \item $u_i = u_j$ and $u_k \neq u_l$ for $k \neq l$ with $\{k, l\} \neq \{i, j\}$, and
    \item  further,
\[\operatorname{min}_{k \notin  T} u_k > u_i=u_j > \operatorname{max}_{k \in T \setminus E} u_k.\] 
\end{itemize}
A calculation shows that for such a vector $u$, the face $\Delta_I^u$ is either a point or an edge, 
\[\Delta_I^{u} = \begin{cases}
[e_i, e_j], & \text{ if } E  \subseteq I \subseteq T,\\
e_k, & \text{ if } \text{otherwise,} \, \text{ where } k = \underset{k \in I}{\operatorname{arg max}} \,\,u^{T}e_k.
\end{cases}.\]
Therefore up to translation, 
\[
P^u \ = \ \sum _{I} \alpha_I \Delta _I^u \ = \ \sum_{E \subseteq I \subseteq T} \alpha _I [e_i,e_j],
\]
and
\[
Q^u \ = \ \sum _{I} \beta_I \Delta _I^u \ = \ \sum_{E \subseteq I \subseteq T} \beta _I [e_i,e_j].
\]
Thus the desired inequality follows from Theorem~\ref{thm:chargenperm}. 

For the converse direction, assume that $u\in \mathbb{R}^d\setminus \{0\}$ is a vector such that $Q^u$ is either a vertex or an edge. Let us first assume that $Q^u$ is a vertex. We claim that $P^u$ must also be a vertex. To see this, assume otherwise there is an $I$ with $\alpha _I>0$ and $\dim \Delta _I^u>0$. Then $[e_i,e_j]\subseteq \Delta _I^u$ for some $i,j\in I$, $i\neq j$. This further implies that $[e_i,e_j]\subseteq \Delta _J^u$ for all $\{i,j\}\subseteq J\subseteq I$. By \eqref{eq:inequality},
\[
0 \ < \ \alpha _I \ \leq \ \sum _{\{i,j\}\subseteq J\subseteq I}\alpha _I \ \leq \ \sum _{\{i,j\}\subseteq J\subseteq I}\beta _I \, .
\]
Thus there must be a $\{i,j\}\subseteq J\subseteq I$ with $\beta _J>0$ and therefore $\dim Q^u\geq \dim \Delta _J ^u > 0$, a contradiction.

If $Q^u$ is an edge, by Theorem~\ref{thm:edgedirections}, we may assume that $Q^u=\lambda [e_i,e_j]$ for some $\lambda > 0$, up to translation. Then necessarily, $u_i=u_j$. Let $M$ be the subset of all $2$-element subsets $\{k,l\}$ for which $u_k=u_l$. For all $F=\{k,l\}\in M$ let $T_{F}=\{i\in [d]\colon u_i\leq u_k=u_l\}$. We observe that $[e_k,e_l]\subseteq \Delta _I ^u$ if and only if $F\subseteq I \subseteq T_F$. Therefore, for all $F\neq E$ in $M$ and all $I$ with $F\subseteq I\subseteq T_F$ we must have $\beta _I=0$ since $Q^u=\lambda [e_i,e_j]$. Thus we also obtain
\[
\sum _{F\subseteq I\subseteq T_F}\beta _I \ = \ 0 \, ,
\]
and by \eqref{eq:inequality} this equality remains true if we replace all $\beta _I$ by $\alpha _I$. This, in turn, implies that $P^u$ equals $\mu [e_i,e_j]$ with $\mu=\sum _{E\subseteq I\subseteq T_{E}}\alpha _I $ which by \eqref{eq:inequality} is smaller than $\lambda=\sum _{E\subseteq I\subseteq T_{E}}\beta _I $. Thus $P$ is a Minkowski summand of $Q$ by Theorem~\ref{thm:Minkowskisummands}.

Theorem~\ref{prop:positivecomb} below together with its proof via cone duality imply that the inequalities \eqref{eq:supermodtrans} are facet-defining.
\end{proof}

The previous proof of Theorem~\ref{thm:chargenperm} made use of the characterization of the edge directions of generalized permutahedra given in Theorem~\ref{thm:edgedirections}. We now give a second proof that will display that the inequalities \eqref{eq:supermodtrans} given in Theorem~\ref{thm:chargenperm} are exactly the defining inequalities of the cone of supermodular functions after a change of variables.

In what follows, we use the notation $2^{[d]}$ to denote the set of all subsets of $[d]$. A function $2^{[d]} \rightarrow \mathbb{R}, I\mapsto z_I$  is called \textbf{supermodular} if 
\begin{equation}\label{eq:supermod}
z_I + z_J \ \leq \ z_{I\cup J} + z_{I\cap J}
\end{equation}
for all subsets $I, J \subseteq [d]$. In particular, the set of all supermodular functions forms a polyhedral cone. This cone has been in the focus of research in game theory, statistics and optimization. In optimization, typically the equivalent perspective of submodular functions is taken: a function $f$ is \textbf{submodular} if and only if $-f$ is supermodular. The facets of the pointed cone of supermodular functions, normalized such that $z_\emptyset =0$, are well-understood and are given by all inequalities of the form
\begin{equation}\label{eq:supermodfacets}
    z_{K\cup \{i\}} +z_{K\cup \{j\}} \ \leq \  z_{K\cup \{i,j\}}+z_{K} \,
\end{equation}
for all $K\subseteq [d]$ and all $i,j\in [d]\setminus K$, $i\neq j$, (see, e.g., \cite[Theorem 44.1]{CombOptB})). In contrast, the rays of the cone of supermodular functions are far less understood. In~\cite{Shapley} Shapley gave an explicit description of the rays in the case $d=4$. Also Edmonds~\cite{Edmonds} raised the question of determining the extreme submodular functions. In~\cite{studeny2016basic} operations preserving the rays are studied and in~\cite{StudenyKroupa} necessary and sufficient conditions for extremality of a supermodular function are given. For further references on extreme supermodular/submodular functions as well as their significance in the pertaining areas we refer to~\cite{StudenyKroupa}.

There is a one-to-one correspondence of supermodular functions and generalized permutahedra via their facet description: for every vector $\{z_I\}_{I\subseteq [d]} \in \mathbb{R}^{2^{[d]}}$ with $z_\emptyset =0$ let 
\[
P(\{z_I\}) \ = \ \left\{ \mathbf{x}\in \mathbb{R}^d \colon \sum _{i=1}^d x_i =z_{[d]} \, , \sum _{i\in I} x_i \geq z_I \text{ for all } \emptyset \subseteq I\subset [d] \right\} \, ,
\]
where we assume that all $z_I$ are chosen maximally, that is, all defining inequalities of the polytope $P(\{z_I\})$ are tight. Every generalized permutahedra in $\mathcal{P}_d$ is a polytope of the form $P(\{z_I\})$, but not every such polytope is a generalized permutahedra. The following theorem characterizes all vectors $\{z_I\}$ for which $P(\{z_I\})$ is a generalized  permutahedron. This characterization appeared in~\cite[Proposition 3.2]{Faces}. The equivalence to Definition~\ref{def:genperm} follows from Lemma 9 and Corollary 11 in~\cite{StudenyKroupa}.  
\begin{theorem}\label{thm:inequalities}
Let $\{z_I\}_{I\subseteq [d]}$ be a vector in $\mathbb{R}^{2^{[d]}}$ with $z_\emptyset =0$. Then the polytope $P(\{z_I\})$ is a generalized permutahedron if and only if the function $2^{[d]}\rightarrow \mathbb{R}, I \mapsto z_I$
is supermodular.
\end{theorem}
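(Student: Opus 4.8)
The plan is to pass between two coordinate systems on $\mathcal{P}_d$: the \emph{$y$-coordinates} $\{y_I\}$ of the signed Minkowski sum decomposition from Proposition~\ref{prop:Ardila}, and the \emph{$z$-coordinates} $\{z_I\}$ recording the tight values $z_I=\min_{\mathbf{x}\in P}\sum_{i\in I}x_i$. I would show that these are related by the zeta transform on the Boolean lattice $2^{[d]}$, and that under this linear change of variables the inequalities~\eqref{eq:supermodtrans} of Theorem~\ref{thm:chargenperm} turn into exactly the facet inequalities~\eqref{eq:supermodfacets} of the cone of supermodular functions. Since~\eqref{eq:supermodfacets} characterizes supermodularity, the theorem then follows by combining this dictionary with Theorem~\ref{thm:chargenperm} and Proposition~\ref{prop:Ardila}.

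First I would record the $z$-coordinates of the building blocks. For a polytope $R$ write $z^R_S=\min_{\mathbf{x}\in R}\sum_{i\in S}x_i$; as this is, up to sign, a value of the support function of $R$, it is additive under Minkowski sums, so $z^{Q-P}_S=z^Q_S-z^P_S$ whenever the difference exists. Evaluating the linear functional $\sum_{i\in S}x_i$ at the vertices $e_k$ of $\Delta_I$ shows that $z^{\Delta_I}_S$ equals $1$ if $I\subseteq S$ and $0$ otherwise, for $\emptyset\neq I$. Hence, if $P=\sum_I y_I\Delta_I$ as in Proposition~\ref{prop:Ardila} (recall $y_\emptyset=0$), splitting into positive and negative parts and using additivity gives $z^P_S=\sum_{I\subseteq S}y_I$ for every $S$. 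In particular these are automatically the maximal (tight) values, the map $\{y_I\}\mapsto\{z^P_S\}$ is the zeta transform, hence invertible by M\"obius inversion, and since $P$ is a generalized permutahedron it is reconstructed from its tight values as $P=P(\{z^P_S\})$ (the reconstruction fact noted after the definition of $P(\{z_I\})$, which rests on the edge-direction characterization Theorem~\ref{thm:edgedirections}).

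Next I would carry out an inclusion--exclusion computation: for $E=\{i,j\}$ and $E\subseteq T$,
\[
\sum_{E\subseteq I\subseteq T}y_I \ = \ z^P_T-z^P_{T\setminus\{i\}}-z^P_{T\setminus\{j\}}+z^P_{T\setminus\{i,j\}} \ = \ z_{K\cup\{i,j\}}+z_K-z_{K\cup\{i\}}-z_{K\cup\{j\}},
\]
where $K=T\setminus\{i,j\}$, and as $T$ and $E$ vary, $(K,i,j)$ runs over exactly the index set of~\eqref{eq:supermodfacets}. Thus $\{y_I\}$ satisfies~\eqref{eq:supermodtrans} if and only if $\{z^P_I\}$ satisfies~\eqref{eq:supermodfacets}, i.e.\ is supermodular. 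Both implications of the theorem now drop out. If $P(\{z_I\})$ is a generalized permutahedron $P$ (with the $z_I$ taken maximal), then $P=\sum_I y_I\Delta_I$ by Proposition~\ref{prop:Ardila}, the $z_I$ coincide with $z^P_I=\sum_{I'\subseteq I}y_{I'}$, Theorem~\ref{thm:chargenperm} yields~\eqref{eq:supermodtrans}, hence $\{z_I\}$ is supermodular. Conversely, if $\{z_I\}$ is supermodular, define $\{y_I\}$ by M\"obius inversion; then~\eqref{eq:supermodtrans} holds, so by Theorem~\ref{thm:chargenperm} the sum $\sum_I y_I\Delta_I$ defines a generalized permutahedron $P$ with $z^P_S=z_S$, and $P=P(\{z^P_S\})=P(\{z_I\})$.

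The only place I expect real care to be needed is the reconstruction bookkeeping: checking that a generalized permutahedron equals $P$ of its own tight values, i.e.\ that its relevant facet normals are among the $\pm\mathbf{1}_S$, and that the ``chosen maximally'' normalization in the definition of $P(\{z_I\})$ matches the tight values produced by the zeta transform. This is precisely the statement that the normal fan of a generalized permutahedron coarsens the braid fan, already packaged in Theorem~\ref{thm:edgedirections}; everything else is the elementary M\"obius/zeta calculus on $2^{[d]}$ together with additivity of support functions, which is routine.
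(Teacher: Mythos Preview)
The paper does not actually give its own proof of this theorem: it is imported from the literature (Postnikov--Reiner--Williams and Studen\'y--Kroupa) and then \emph{used} as an input to the paper's second proof of Theorem~\ref{thm:chargenperm}. Your proposal runs that implication in the opposite direction: you take Theorem~\ref{thm:chargenperm} as known and deduce Theorem~\ref{thm:inequalities} from it via the zeta/M\"obius change of coordinates. The algebra you carry out --- $z_S=\sum_{I\subseteq S}y_I$ and the inclusion--exclusion identity $\sum_{E\subseteq I\subseteq T}y_I=z_T-z_{T\setminus\{i\}}-z_{T\setminus\{j\}}+z_{T\setminus\{i,j\}}$ --- is precisely the computation in the paper's second proof of Theorem~\ref{thm:chargenperm}, read backwards.

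Your argument is correct, with one caveat you should make explicit: since the paper's second proof of Theorem~\ref{thm:chargenperm} invokes Theorem~\ref{thm:inequalities}, you must appeal to the \emph{first} (geometric, edge-length) proof of Theorem~\ref{thm:chargenperm} to avoid circularity. That first proof rests only on Theorems~\ref{thm:Minkowskisummands} and~\ref{thm:edgedirections}, so this is fine. The reconstruction step $P=P(\{z^P_S\})$ that you flag is indeed the only place requiring care; it is exactly the statement that the normal fan of a generalized permutahedron coarsens the braid fan, which follows from Theorem~\ref{thm:edgedirections} as you say. One small additional check for the converse direction: you should note that the supermodular vector $\{z_I\}$ you start with is automatically tight for the polytope $P=\sum_I y_I\Delta_I$ you construct (since $z_S=z^P_S$ by your zeta computation), so the ``chosen maximally'' hypothesis in the definition of $P(\{z_I\})$ is satisfied and the identification $P=P(\{z_I\})$ is legitimate.
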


In~\cite{Ardila}, Ardila, Benedetti and Doker explicitly described the representation of $P(\{z_I\})$ as signed Minkowski sum.
\begin{proposition}[{\cite[Proposition 2.4]{Ardila}}]\label{prop:Ardila}
For every generalized permutahedron $P(\{z_I\})\in \mathcal{P}_d$ there are uniquely determined real numbers $y_I$ for all $\emptyset \neq I\subseteq [d]$ and $y_\emptyset =0$ such that
\[
P(\{z_I\}) \ = \ \sum_{I\subseteq [d]} y_I \Delta _I \, ,
\]
namely $y_I=\sum _{J\subseteq I} (-1)^{|I|-|J|} z_J$.
\end{proposition}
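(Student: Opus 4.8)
The plan is to start from the known facts that (a) by Theorem~\ref{thm:inequalities} the polytope $P(\{z_I\})$ is a generalized permutahedron, hence by the already-stated uniqueness half of the decomposition there is a unique signed Minkowski sum $P(\{z_I\}) = \sum_{I\subseteq[d]} y_I\Delta_I$, and (b) taking support functions turns Minkowski sums into pointwise sums. So the whole proposition reduces to a Möbius-inversion identity on the Boolean lattice $2^{[d]}$, once we know how $z_I$ is recovered linearly from the coefficients of the standard simplices.

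First I would compute, for a generalized permutahedron written as $P = \sum_{J} y_J \Delta_J$, the value of the supermodular function $z_I$ attached to it, i.e. $z_I = \min_{\mathbf{x}\in P}\sum_{i\in I} x_i = -h_P(-\mathbf{1}_I)$ where $h_P$ is the support function and $\mathbf{1}_I$ is the indicator vector of $I$. Since support functions add over Minkowski sums and are homogeneous, $h_P(-\mathbf{1}_I) = \sum_J y_J\, h_{\Delta_J}(-\mathbf{1}_I)$ — this extends to signed sums because the support function of the Minkowski difference $Q-P$ is $h_Q - h_P$ on the relevant cone of directions. For a single standard simplex, $h_{\Delta_J}(-\mathbf{1}_I) = \max_{k\in J}(-\mathbf{1}_I)_k$, which equals $0$ if $J\not\subseteq I$ (some $k\in J$ lies outside $I$) and equals $-1$ if $J\subseteq I$ (with $J\neq\emptyset$). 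Hence $z_I = \sum_{\emptyset\neq J\subseteq I} y_J$, and with the normalization $y_\emptyset=0$ this is just $z_I = \sum_{J\subseteq I} y_J$ for all $I\subseteq[d]$.

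The last step is pure combinatorics: the relation $z_I = \sum_{J\subseteq I} y_J$ for all $I$ is precisely the statement that $z$ is the sum-over-down-sets transform of $y$ in the Boolean lattice, so Möbius inversion on $2^{[d]}$ (where the Möbius function is $\mu(J,I) = (-1)^{|I|-|J|}$) gives the inverse relation $y_I = \sum_{J\subseteq I}(-1)^{|I|-|J|} z_J$, which is exactly the claimed formula; one checks $y_\emptyset = z_\emptyset = 0$ consistently. Uniqueness of the representation is inherited from the earlier statement of the decomposition, or alternatively follows because the linear map $y\mapsto z$ just exhibited is invertible.

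The main obstacle is the justification in the signed case: $h_{\Delta_J}(-\mathbf{1}_I)$ was computed for genuine polytopes, but we need $h_{P(\{z_I\})}(-\mathbf{1}_I) = \sum_J y_J h_{\Delta_J}(-\mathbf{1}_I)$ when some $y_J$ are negative. This is handled by rewriting the signed sum as an honest Minkowski-sum identity, namely $P(\{z_I\}) + \sum_{y_J<0}(-y_J)\Delta_J = \sum_{y_J\geq 0} y_J\Delta_J$ as in equation~\eqref{eq:equivalent}, taking support functions of both sides (legitimate, since both sides are actual polytopes), evaluating at $-\mathbf{1}_I$, and rearranging; additivity and nonnegativity of the coefficients on each side make every step valid. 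Care is only needed that the minimum defining $z_I$ is attained (the $z_I$ are chosen "maximally", i.e. the inequalities are tight), which is guaranteed by the hypothesis on $P(\{z_I\})$.
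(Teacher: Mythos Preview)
The paper does not give its own proof of this proposition; it is quoted as a result of Ardila--Benedetti--Doker and used without argument. The relation $z_I = \sum_{J\subseteq I} y_J$ and its M\"obius inverse do reappear in the second proof of Theorem~\ref{thm:chargenperm}, but there they are invoked as consequences of this proposition, not derived.

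Your argument is correct and supplies exactly what the paper omits. The support-function computation $h_{\Delta_J}(-\mathbf{1}_I) = -1$ if $\emptyset\neq J\subseteq I$ and $0$ otherwise is right, and the resulting identity $z_I = \sum_{J\subseteq I} y_J$ is precisely the zeta transform on the Boolean lattice, so M\"obius inversion finishes it. Your handling of the signed case is the right move: rewriting the signed sum as the honest Minkowski identity~\eqref{eq:equivalent} between two genuine polytopes, applying additivity of support functions to both sides, and rearranging avoids any worry about what ``$h_{Q-P}$'' means a priori. You are also right to flag the tightness hypothesis on the $z_I$; without it one would only get $z_I \leq -h_P(-\mathbf{1}_I)$, not equality. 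The only remark I would add is that once you have shown the linear map $\{y_I\}\mapsto\{z_I\}$ is the zeta transform (hence invertible), uniqueness follows directly from that invertibility and you need not appeal back to the earlier existence/uniqueness statement at all---so the argument is in fact self-contained and not circular.
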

\begin{proof}[Second proof of Theorem~\ref{thm:chargenperm}]
Let $U$ be the linear transformation defined by
\begin{align*}
 U\colon \mathbb{R}^{2^{[d]}} & \longrightarrow \mathbb{R}^{2^{[d]}} \\
  z_I & \longmapsto
  y_I = \sum _{J\subseteq I}(-1)^{|I|-|J|}z_J \, .
\end{align*}
Then, by M\"obius inversion, $U$ is a bijection with $z_I=U^{-1}(y_I)=\sum _{J\subseteq I}y_J$ for all $I$. By Theorem~\ref{thm:inequalities}, $P(\{z_I\})$ is a generalized permutahedron if and only if $\{z_I\}$ satisfies the supermodularity condition \eqref{eq:supermod}. On the other hand, by Theorem~\ref{prop:Ardila}, $P(\{z_I\})=\sum y_I\Delta _I$ where $y_I= \sum _{J\subseteq I}(-1)^{|I|-|J|}z_J=U(z_I)$. In particular, a signed Minkowski sum $\sum y_I \Delta _I$ defines a generalized permutahedron if and only if $\{y_I\}=U(\{z_I\})$ where $\{z_I\}$ satisfies the supermodularity condition \eqref{eq:supermod}. In other words, the set of all vectors $\{y_I\}$ such that $\sum y_I \Delta _I$ defines a generalized permutahedron is a polyhedral cone, namely the image of the cone of supermodular functions under the linear bijection $U$. 
By \eqref{eq:supermodfacets} $\{z_I\}$ defines a supermodular function if and only if for all $K\subseteq [d]$ and all $i,j\in [d]\setminus K$, $i\neq j$,
\[
z_{K\cup \{i\}} +z_{K\cup \{j\}} \ \leq \  z_{K\cup \{i,j\}}+z_{K} \, .
\]
These inequalities are facet-defining and equivalent to
\begin{eqnarray}
\sum _{J\subseteq K\cup \{i\}}y_J +\sum _{J\subseteq K\cup \{j\}}y_J & \leq &  \sum _{J\subseteq K\cup \{i,j\}}y_J+\sum _{J\subseteq K}y_J \quad\quad\Leftrightarrow\\
 0 & \leq & \sum _{J\subseteq K}y_{J\cup \{i,j\}}\label{eq:cond} \, .
\end{eqnarray}
We conclude by observing that the inequality \eqref{eq:cond} is equivalent to condition \eqref{eq:supermodtrans} when interchanging $K$ with $T\setminus \{i,j\}$.
\end{proof}

\section{Minkowski linear functionals}
We call a function $\varphi \colon \mathcal{P}_d\rightarrow \mathbb{R}$ \textbf{Minkowski linear} if $\varphi (\emptyset)=0$ and
\[
\varphi(\lambda P+\mu Q) \ = \ \lambda \varphi (P) + \mu \varphi (Q) 
\]
for all $P,Q \in \mathcal{P}_d$ and all $\lambda,\mu \geq 0$. The function $\varphi$ is \textbf{positive} if $\varphi (P)\geq 0$ for all $P\in \mathcal{P}$ and \textbf{translation-invariant} if $\varphi (P+t)=\varphi (P)$ for all $P\in \mathcal{P}_d$ and all $t\in \mathbb{R}^d$. If $\varphi \colon \mathcal{P}\rightarrow \mathbb{R}$ is a Minkowski linear functional then by linearity we obtain
\[
\varphi \left(\sum _I y_I \Delta _I\right) \ = \ \sum _I y_I \varphi (\Delta _I) \, 
\]
and $\varphi(\Delta_\emptyset )=0$. By Theorem~\ref{prop:Ardila}, every generalized permutahedron has a unique representation as a signed Minkowski sum $\sum _I y_I\Delta _I$ given $y_\emptyset =0$. Consequently, we may identify every Minkowski linear map $\varphi \colon \mathcal{P}_d\rightarrow \mathbb{R}$ with the vector $\{\varphi (\Delta _I)\}_{\emptyset \neq I\subseteq [d]} \in \mathbb{R}^{2^{[d]}\setminus \emptyset}$.

 For any $2$-element subset $E\in {[d]\choose 2}$ and any $T\subseteq [d]$ such that $E\subseteq T$ let $v_E^T$ be the Minkowski linear functional defined by
\[
v_E^T (\Delta _I) \ = \ \begin{cases} 1 & \text{ if }E\subseteq I \subseteq T \, ,\\
0 & \text{ otherwise. }

\end{cases}
\]
Since $\Delta _{\{i\}}=e_i$  and $v_E^T (\Delta _{\{i\}})=0$ for all $1\leq i\leq d$ these functionals are translation-invariant.

The following theorem characterizes all positive, translation-invariant Minkowski linear functionals on $\mathcal{P}_d$.
\begin{theorem}\label{prop:positivecomb}
Let $\varphi \colon \mathcal{P}_d\rightarrow \mathbb{R}$ be a Minkowski linear functional. Then $\varphi$ is positive and translation-invariant if and only if there are nonnegative real numbers $c_E^T$ such that
\[
\varphi = \sum _{E\in {[d]\choose 2}}\sum _{T\supseteq E} c_E^T v_E^T \, .
\]
In particular, the family of positive, translation-invariant Minkowski linear functionals is a polyhedral cone with rays $v_E^T$.
\end{theorem}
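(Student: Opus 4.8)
The plan is to reduce this to a statement about cones via duality. Recall from Theorem~\ref{thm:chargenperm} that the coefficient vectors $\{y_I\}_{\emptyset \neq I\subseteq[d]}$ of generalized permutahedra form a polyhedral cone $\mathcal{C}\subseteq \mathbb{R}^{2^{[d]}\setminus\emptyset}$, cut out precisely by the facet-defining inequalities $\sum_{E\subseteq I\subseteq T} y_I \geq 0$ ranging over $E\in\binom{[d]}{2}$ and $T\supseteq E$. A Minkowski linear functional $\varphi$ is, after the identification with the vector $\{\varphi(\Delta_I)\}$, simply a linear functional on $\mathbb{R}^{2^{[d]}\setminus\emptyset}$, and $\varphi$ is positive exactly when it is nonnegative on every point of $\mathcal{C}$ (since $\varphi(\sum y_I\Delta_I)=\sum y_I\varphi(\Delta_I)$ and every $P\in\mathcal{P}_d$ arises this way). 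Thus $\varphi$ positive is equivalent to $\varphi$ lying in the dual cone $\mathcal{C}^\vee$.

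The key step is then the identification $\mathcal{C}^\vee = \operatorname{cone}\{v_E^T\}$. First I would observe that the functionals $v_E^T$ are, by construction, exactly the linear functionals appearing on the left-hand side of the inequalities \eqref{eq:supermodtrans}: the pairing $\langle v_E^T, \{y_I\}\rangle = \sum_{E\subseteq I\subseteq T} y_I$. Since $\mathcal{C}$ is defined by the inequalities $\langle v_E^T, \cdot\rangle \geq 0$, the cone $\mathcal{C}$ is by definition the dual of $\operatorname{cone}\{v_E^T\}$, i.e. $\mathcal{C} = (\operatorname{cone}\{v_E^T\})^\vee$. To conclude I would invoke biduality for polyhedral cones: a finitely generated (equivalently, polyhedral) cone $K$ satisfies $(K^\vee)^\vee = K$, provided $K$ is closed, which holds automatically for finitely generated cones. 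Applying this with $K = \operatorname{cone}\{v_E^T\}$ gives $\mathcal{C}^\vee = K$, which is exactly the claim: $\varphi$ is positive and translation-invariant iff $\varphi = \sum_{E}\sum_{T\supseteq E} c_E^T v_E^T$ with all $c_E^T\geq 0$.

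A few points need care. The translation-invariance is not an extra constraint to be dualized separately: I would note at the outset that the whole discussion takes place in the space of vectors indexed by $\emptyset\neq I\subseteq[d]$ with the convention $\varphi(\Delta_\emptyset)=0$, and that every $v_E^T$ is translation-invariant (as already observed, since $v_E^T(\Delta_{\{i\}})=0$); conversely any Minkowski linear $\varphi$ which is translation-invariant is determined by its values on the $\Delta_I$ with $|I|\geq 2$, but the cleanest route is to check directly that a positive Minkowski linear functional is automatically translation-invariant, or alternatively just to work within the subspace where translation-invariance holds and note both $\mathcal{C}$ and all the $v_E^T$ live there. The statement that the $v_E^T$ are the rays (extreme rays) of the cone — not merely generators — follows because the inequalities \eqref{eq:supermodtrans} are facet-defining for $\mathcal{C}$ (asserted in Theorem~\ref{thm:chargenperm}), and for a full-dimensional polyhedral cone the extreme rays of the dual cone are in bijection with the facets of the primal cone; so no $v_E^T$ is redundant and none is a positive combination of the others.

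The main obstacle is the bookkeeping around dimensions and lineality: to state "$v_E^T$ are the rays" and "the facet inequalities are facet-defining" precisely, one must identify the ambient space correctly and verify that $\mathcal{C}$ is full-dimensional in (or pointed modulo the appropriate lineality space of) $\mathbb{R}^{2^{[d]}\setminus\emptyset}$, so that the facet–extreme-ray duality is clean and the generating set $\{v_E^T\}$ is irredundant. This is where I would spend the real effort — verifying, via the explicit supermodular-function picture from the second proof of Theorem~\ref{thm:chargenperm} (where the $v_E^T$ pull back under $U^{-1}$ to the standard facet functionals \eqref{eq:supermodfacets} of the supermodular cone, whose facial structure is classical, e.g. \cite[Theorem 44.1]{CombOptB}), that the count and independence work out. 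Everything else is a direct application of polyhedral duality.
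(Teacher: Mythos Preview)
Your approach via cone duality is exactly the paper's argument for the main equivalence: identify $\varphi$ with a vector, recognize that positivity means lying in $\mathcal{C}^\vee$, and use biduality together with the inequality description of $\mathcal{C}$ from Theorem~\ref{thm:chargenperm}. The treatment of translation-invariance is also the same: it comes for free because every $v_E^T$ vanishes on singletons.

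One caution on the extremality of the $v_E^T$. You invoke the assertion in Theorem~\ref{thm:chargenperm} that the inequalities \eqref{eq:supermodtrans} are facet-defining in order to conclude, via facet--ray duality, that the $v_E^T$ are extreme rays. But in the paper's internal logic that assertion is \emph{deduced from} the present theorem (see the last sentence of the first proof of Theorem~\ref{thm:chargenperm}), so citing it here would be circular. Your proposed escape route---pulling back along $U$ to the supermodular cone and quoting the classical facet description \cite[Theorem 44.1]{CombOptB}---is legitimate and does break the circle. The paper takes a more elementary path: it checks directly that no $v_E^T$ is a nonnegative combination of the others by assuming $v_E^T=\sum \lambda_{E'}^{T'} v_{E'}^{T'}$ and evaluating both sides at $\Delta_E$ and $\Delta_T$, which forces all coefficients except $\lambda_E^T$ to vanish. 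This two-line verification sidesteps any discussion of full-dimensionality, lineality spaces, or the facial structure of the supermodular cone, and simultaneously establishes the ``facet-defining'' claim back in Theorem~\ref{thm:chargenperm}.
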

\begin{proof}
Let $C\subseteq \mathbb{R}^{2^{[d]}\setminus \emptyset}$ be the set of all vectors $\{y_I\}$ such that $\sum  y _I \Delta _I$ defines a generalized permutahedron. Then, by Theorem~\ref{thm:chargenperm}, $C$ is a polyhedral cone with inequality description
\[
C \ = \ \bigcap _{E\in {[d]\choose 2}}\bigcap _{T\supseteq E}\{ \{y_I \} \colon \sum _{E\subseteq I \subseteq T}y_I \geq 0 \} \, .
\]
Thus, by cone duality, a Minkowski functional $\varphi$ is positive if and only if $\varphi= \sum _{E\in {[d]\choose 2}}\sum _{T\supseteq E} c_E^T v_E^T$ for some nonnegative numbers $c_E^T$. Since $v_E^T (\Delta _I)=0$ for all $1$-element subsets $I\subseteq [d]$ the functional $\varphi$ is also translation-invariant in this case. To see that the functionals $v_E^T$ are rays of the cone of positive, translation-invariant Minkowski functionals we observe that none of them can be expressed as a positive linear combination of the others. For that assume that $v_E^T = \sum \lambda_{E'}^{T'}  v_{E'}^{T'}$ for some nonnegative $\lambda_{E'}^{T'}$. Then $\lambda_{E'}^{T'}=0$ for all $E \neq E'$ and all $T' \not \subseteq T$. From evaluating $v_E^T$ at $\Delta _T$ it follows that $\lambda_{E}^{T}=1$. Then evaluating at $\Delta _E$ yields $\lambda_{E'}^{T'}=0$ for all $(E',T')\neq (E,T)$. This finishes the proof.
\end{proof}
Next, we provide a geometric description of the ray generators $v_E^T$. Let $E=\{i,j\}\in {[d] \choose 2}$ and $T\subseteq [d]$ such that $E\subseteq T$. We say that a vector $u\neq 0$ is \textbf{compatible} with $(E,T)$ if $u_i=u_j$, all other coordinates of $u$ are different and distinct from each other, and
\[
\min _{k\not \in T}u_k>u_i=u_j>\max _{k\in T}u_k \, .
\]
\begin{proposition}\label{prop:positivegeom}
Let $E\in {[d] \choose 2}$ and $T\subseteq [d]$ such that $E\subseteq T$. Let $u \neq 0$ be compatible with $(E,T)$. Then for all $P\in \mathcal{P}_d$, $P^{u}$ is one dimensional and
\begin{equation}\label{eq:geometric}
    v_E^T (P) \ = \ \vol _1 (P^u) \, ,
\end{equation}
where $\vol _1$ denotes the normalized volume where $\vol _1 ([e_i,e_j])=1$.
\end{proposition}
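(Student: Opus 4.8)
The plan is to reduce everything to the unique signed Minkowski sum decomposition $P=\sum_{I}y_I\Delta_I$ of Proposition~\ref{prop:Ardila} and to the elementary fact that taking the face in a fixed direction commutes with Minkowski sums, i.e. $(A+B)^u=A^u+B^u$ for polytopes $A,B$. The first thing I would do is record the face computation that already appeared in the first proof of Theorem~\ref{thm:chargenperm}: writing $E=\{i,j\}$, for a vector $u$ compatible with $(E,T)$ the inequalities $\min_{k\notin T}u_k>u_i=u_j>\max_{k\in T\setminus E}u_k$ together with the genericity of the remaining coordinates of $u$ force, for every $I\subseteq[d]$,
\[
\Delta_I^u \ = \ \begin{cases} [e_i,e_j], & \text{if } E\subseteq I\subseteq T,\\ e_{k(I)}, & \text{otherwise},\end{cases}
\]
where in the second case $k(I)$ is the unique index in $I$ maximizing $u^{T}e_k$.

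Next I would assemble this over the positive and negative parts of $\{y_I\}$. Put $\alpha_I=-\min\{y_I,0\}$, $\beta_I=\max\{y_I,0\}$, $R=\sum_I\alpha_I\Delta_I$ and $Q=\sum_I\beta_I\Delta_I$, so that $P+R=Q$ by \eqref{eq:equivalent}. Using $(A+B)^u=A^u+B^u$ and the formula for $\Delta_I^u$, the summands with $E\subseteq I\subseteq T$ contribute nonnegative multiples of the segment $[e_i,e_j]$ while all other summands contribute single points; since a Minkowski sum of nonnegative multiples of translates of $[e_i,e_j]$ is again a translate of a nonnegative multiple of $[e_i,e_j]$, and adding points only translates, one gets that $Q^u$ is a (possibly degenerate) segment parallel to $e_i-e_j$ with
\[
\vol_1(Q^u) \ = \ \sum_{E\subseteq I\subseteq T}\beta_I, \qquad\text{and likewise}\qquad \vol_1(R^u) \ = \ \sum_{E\subseteq I\subseteq T}\alpha_I .
\]

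Finally, taking faces in direction $u$ in the identity $P+R=Q$ gives $P^u+R^u=Q^u$. Since $Q^u$ and $R^u$ are contained in lines parallel to $e_i-e_j$, so is $P^u$ (any component of $P^u$ transverse to $e_i-e_j$ would survive in the sum); in particular $P^u$ is a point or a segment parallel to $e_i-e_j$, so $\vol_1(P^u)$ is well defined, and
\[
\vol_1(P^u) \ = \ \vol_1(Q^u)-\vol_1(R^u) \ = \ \sum_{E\subseteq I\subseteq T}(\beta_I-\alpha_I) \ = \ \sum_{E\subseteq I\subseteq T}y_I \ = \ v_E^T(P),
\]
the last equality being Minkowski linearity of $v_E^T$ together with $v_E^T(\Delta_I)=1$ exactly when $E\subseteq I\subseteq T$. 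This is \eqref{eq:geometric}; the face $P^u$ is genuinely one-dimensional precisely when $v_E^T(P)>0$.

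The only genuinely delicate step is the face computation of $\Delta_I^u$ for a compatible $u$ — everything afterwards is formal manipulation with Minkowski sums and normalized edge lengths. The one point that requires a little care there is checking that the vertex-valued faces $e_{k(I)}$ contribute nothing beyond a global translation, so that the $\vol_1$ bookkeeping comes out exactly as claimed; and, if one wants the statement literally as phrased, recording that $P^u$ is at most one-dimensional in general and exactly one-dimensional when $v_E^T(P)>0$.
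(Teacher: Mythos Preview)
Your argument is correct and follows essentially the same route as the paper's: both hinge on the computation of $\Delta_I^u$ for compatible $u$ and on the compatibility of taking faces with Minkowski sums. The paper packages this slightly more abstractly --- after noting via the weak-Minkowski-summand characterization of $\mathcal{P}_d$ that $P^u$ is always a (possibly degenerate) segment in direction $e_i-e_j$, it observes that $P\mapsto\vol_1(P^u)$ is itself Minkowski linear and then simply checks agreement with $v_E^T$ on the $\Delta_I$, bypassing your explicit split into positive and negative parts.
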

\begin{proof}
Let $E=\{i,j\}$. Since $u$ is compatible we have that, up to translation, $\Pi _d ^u =\sum [e_i,e_j]^u =[e_i,e_j]$. Since every generalized permutahedron is a weak Minkowski summand of $\Pi _d$, by Theorem~\ref{thm:edgedirections}, $P^u = \lambda [e_i,e_j]$, and $\vol _1 (P^u)$ is therefore well-defined. Since $(\lambda P+\mu Q)^u=\lambda P^u+\mu Q^u$ for all polytopes $P,Q$ and all $\lambda,\mu \geq 0$, equation \eqref{eq:geometric} defines a Minkowski linear functional on $\mathcal{P}_d$. We observe that since $u$ is compatible with $(E,T)$ we have $\Delta_I^u=[e_i,e_j]$ if and only if $E\subseteq I\subseteq T$. In this case $\vol _1 (\Delta _I^u)=1$. Otherwise, $\Delta_I^u$ is a vertex and $\vol _1 (\Delta _I^u)=0$. Since every Minkowski linear function is uniquely defined by its values on $\Delta _I$ for all $I\subseteq [d]$ this finishes the proof.
\end{proof}
\subsection{Symmetric Minkowski linear functionals}
We conclude this section by classifying all positive Minkowski linear functionals that are invariant under coordinate permutations. We call such functionals \textbf{symmetric}. The natural action of the symmetric group $S_d$ on $\mathbb{R}^d$ which acts by permuting the coordinates induces an action on the class of generalized permutahedra which, in turn, induces an action on Minkowki linear functionals on generalized permutahedra by $(\sigma \cdot \varphi) (P) \ = \ \varphi (\sigma (P))$ for all $P\in \mathcal{P}_d$. Then every symmetric translation-invariant Minkowski linear functional $\varphi$ can be identified with the $(d-1)$-dimensional vector $\{\varphi (\Delta _{i+1})\}_{1\leq i\leq d-1}\in \mathbb{R}^{d-1}$. For all $1\leq k\leq d-1$ let $f_k\colon \mathcal{P}_d \rightarrow \mathbb{R}$ be the symmetric, translation-invariant Minkowski linear functional defined by
\begin{equation}
    (f_k)(\Delta _{i+1}) \ = \ {i+1 \choose 2}{d-i-1 \choose k-i} \, 
\end{equation}
for all $1\leq i\leq d-1$.

\begin{theorem}\label{thm:symcomb}
Let $\varphi \colon \mathcal{P}_d\rightarrow \mathbb{R}$ be a Minkowski linear functional. Then $\varphi$ is positive, translation- and symmetric if and only if there are real numbers $c_1,\ldots, c_{d-1} \geq 0$ such that
\[
\varphi \ = \ \sum _{k=1}^{d-1} c_k f_k \, .
\]
In particular, the family of all positive, symmetric and  translation-invariant Minkowski linear functionals forms a simplicial cone of dimension $d-1$.
\end{theorem}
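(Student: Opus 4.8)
The plan is to derive this from the classification of positive, translation-invariant Minkowski linear functionals in Theorem~\ref{prop:positivecomb} together with an averaging argument over the $S_d$-action. Throughout I would use the identification, stated just before the theorem, of a symmetric and translation-invariant Minkowski linear functional $\varphi$ with the vector $\{\varphi(\Delta_{i+1})\}_{1\le i\le d-1}\in\mathbb{R}^{d-1}$; note that every sum $v_E^T$ over a full $S_d$-orbit of pairs $(E,T)$ is symmetric, and it is translation-invariant because each $v_E^T$ vanishes on the singletons $\Delta_{\{i\}}=e_i$.

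First I would show that each $f_k$ is such an orbit sum. For $2\le m\le d$ set $g_m=\sum_{|T|=m}\sum_{E\in\binom{T}{2}}v_E^T$. A direct count — choose a $2$-subset $E\subseteq[i+1]$, then complete $[i+1]$ to a set $T$ of size $m$ inside $[d]$ — gives $g_m(\Delta_{i+1})=\binom{i+1}{2}\binom{d-i-1}{m-i-1}$, so $g_m$ and $f_{m-1}$ agree on all $\Delta_{i+1}$ and hence, being both symmetric and translation-invariant, are equal: $f_k=g_{k+1}$. Since $g_{k+1}$ is a nonnegative combination of the ray functionals $v_E^T$, Theorem~\ref{prop:positivecomb} immediately yields that $f_k$ is positive and translation-invariant, and it is symmetric as an orbit sum; therefore so is any $\sum_k c_k f_k$ with $c_k\ge 0$. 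This settles the ``if'' direction.

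For the converse, take a positive, translation-invariant, symmetric $\varphi$ and use Theorem~\ref{prop:positivecomb} to write $\varphi=\sum_{E,T}c_E^T v_E^T$ with all $c_E^T\ge 0$ (the decomposition need not be unique, which does not matter). Since $\sigma(\Delta_I)=\Delta_{\sigma(I)}$ one computes $\sigma\cdot v_E^T=v_{\sigma^{-1}(E)}^{\sigma^{-1}(T)}$, so averaging a single ray over $S_d$ gives $\frac1{d!}\sum_{\sigma}\sigma\cdot v_E^T=\frac1{N_m}g_m$ with $m=|T|$, where $N_m$ is the size of the orbit. Symmetry of $\varphi$ means $\varphi=\frac1{d!}\sum_\sigma\sigma\cdot\varphi$, and distributing this average over the decomposition collapses it to $\varphi=\sum_{m=2}^{d}b_m g_m$ with $b_m=\sum_{|T|=m,\,E\subseteq T}c_E^T/N_m\ge 0$. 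Using $g_{k+1}=f_k$ from the previous step gives $\varphi=\sum_{k=1}^{d-1}b_{k+1}f_k$ with nonnegative coefficients. Finally, to get that the cone is simplicial of dimension $d-1$ I would check that $f_1,\dots,f_{d-1}$ are linearly independent: under the identification $f_k\leftrightarrow\{f_k(\Delta_{i+1})\}_i$, the $(d-1)\times(d-1)$ matrix has $(i,k)$-entry $\binom{i+1}{2}\binom{d-i-1}{k-i}$; dividing row $i$ by $\binom{i+1}{2}\ne 0$ leaves $\big(\binom{d-i-1}{k-i}\big)_{i,k}$, which is upper triangular with all diagonal entries $\binom{d-i-1}{0}=1$, hence invertible. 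Since the $f_k$ also generate the whole cone, they are its extreme rays.

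The conceptual heart — and the only step where something nontrivial happens — is the averaging argument: symmetrizing the non-unique ray decomposition of $\varphi$ has to collapse it exactly onto the orbit sums $g_m$ while preserving nonnegativity of the coefficients. Everything else is bookkeeping; the two computations to carry out carefully are the identity $g_m=f_{m-1}$ (including the boundary cases $m=2$ and $m=d$) and the triangularity of the evaluation matrix. I do not expect a genuine obstacle beyond these routine checks.
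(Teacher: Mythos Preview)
Your proposal is correct and follows essentially the same approach as the paper: decompose $\varphi$ via Theorem~\ref{prop:positivecomb}, average over $S_d$ using the symmetry of $\varphi$, and identify the orbit sums with the $f_k$ by the same direct count. You are simply more explicit in two places where the paper is brief: you spell out both directions separately, and you verify the linear independence of the $f_k$ via the triangularity of the evaluation matrix rather than declaring it easily seen.
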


\begin{proof}
By Theorem~\ref{prop:positivecomb}, $\varphi$ is a positive, Minkowski linear and translation invariant linear functional if and only if $\varphi = \sum _{E\in {[d]\choose 2}}\sum _{T\supseteq E} c_E^T v_E^T$ for nonnegative numbers $v_E^T$. If $\varphi$ is moreover invariant under permutation of the coordinates we obtain 
\begin{eqnarray}
    d! \cdot \varphi \ &=& \ \sum _{\sigma \in S_d}\sigma \cdot \varphi\\
    &=& \ \sum _{E\in {[d]\choose 2}}\sum _{T\supseteq E} c_E^T \sum _{\sigma \in S_d}\sigma \cdot v_E^T\\
    &=& \ \sum _{E\in {[d]\choose 2}}\sum _{T\supseteq E} c_E^T \cdot |\Stab (v_E^T)|\sum _{\psi \in \orbit (v_E^T)}\psi \, ,\label{eqn:cone}
    \end{eqnarray}
where $\Stab (v_E^T)=\{\sigma \in S_d\colon \sigma v_E^T=v_E^T\}$ denotes the stabilizer and $\orbit (v_E^T)=\{\sigma \cdot v_E^T\colon \sigma \in S_d\}$ denotes the orbit of $v_E^T$. We observe that if $|T|=k$ then $\orbit (v_E^T)=\{v_E^T\colon |T|=k\}$. Clearly, $\sum _{\psi \in \orbit (v_E^T)}\psi$ is symmetric. Therefore, since
\begin{eqnarray*}
\sum _{\psi \in \orbit (v_E^T)}\psi (\Delta _{i+1}) \ &=& \ \sum _{E\in {[d] \choose 2}}\sum _{T\supseteq E\atop |T|=k} v_E^T (\Delta _{i+1})\\
\ &=& \ \sum _{E\in{[d] \choose 2}\atop E\subseteq [i+1]}\sum _{T\supseteq [i+1]\atop |T|=k}1 \\
\ &=& \ {i+1\choose 2}{d-i-1\choose k-i-1} \, 
\end{eqnarray*}
we see that $f_{k-1}=\sum _{\psi \in \orbit (v_E^T)}\psi$ whenever $|T|=k$. Thus, by \eqref{eqn:cone}, every symmetric translation-invariant valuation is a nonnegative linear combination of the functionals $f_1,\ldots, f_{d-1}$ which are easily seen to be linearly independent and positive by Theorem~\ref{prop:positivecomb}. This finishes the proof.
\end{proof}

\section{Applications}
\subsection{Ehrhart positivity}
A \textbf{lattice polytope} is a polytope in $\mathbb{R}^d$ with vertices in the integer lattice $\mathbb{Z}^d$. A famous result by Ehrhart states that the number of lattice points in integer dilates of a lattice polytope is given by a polynomial~\cite{Ehrhart}.
\begin{theorem}[{\cite{Ehrhart}}]\label{thm:Ehrhart}
Let $P\subset \mathbb{R}^d$ be a lattice polytope. Then there is a polynomial $E_P$ of degree $\dim P$ such that
\[
E_P (n) \ = \ |nP\cap \mathbb{Z}^d| 
\]
for all integers $n\geq 1$.
\end{theorem}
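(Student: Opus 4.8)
The statement to prove is Ehrhart's theorem, and the plan is the classical cone--and--generating--function argument, after reducing first to lattice simplices. First I would dispose of the non-full-dimensional case: fixing a vertex $v_0\in\mathbb{Z}^d$ of $P$, the set $\Lambda=(\aff(P)-v_0)\cap\mathbb{Z}^d$ is a lattice of rank $\dim P$, and $|nP\cap\mathbb{Z}^d|$ is computed equally well inside $\Lambda$; choosing a $\mathbb{Z}$-basis of $\Lambda$ reduces matters to the case $\dim P=d$. Then I would triangulate $P$ using only its own vertices, so that every cell is a lattice simplex, and write $P$ as the disjoint union of the relative interiors of all faces $\tau$ of this triangulation, giving
\[
|nP\cap\mathbb{Z}^d| \ = \ \sum_{\tau} |\relint(n\tau)\cap\mathbb{Z}^d| \, .
\]
By M\"obius inversion on the Boolean face lattice of a simplex, $|\relint(n\tau)\cap\mathbb{Z}^d|$ is an integer combination of the functions $n\mapsto|n\tau'\cap\mathbb{Z}^d|$ over faces $\tau'\le\tau$. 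Hence it suffices to prove that $n\mapsto|n\Delta\cap\mathbb{Z}^d|$ is a polynomial of degree $\dim\Delta$ for every lattice simplex $\Delta$, which I would do by induction on $\dim\Delta$.

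For the simplex case, let $\Delta=\conv(v_0,\dots,v_d)$ with $v_0,\dots,v_d\in\mathbb{Z}^d$ affinely independent (full dimension, after passing to $\aff(\Delta)$). Lift to $\hat v_i=(1,v_i)\in\mathbb{Z}^{d+1}$ and let $K=\mathbb{R}_{\ge 0}\hat v_0+\dots+\mathbb{R}_{\ge 0}\hat v_d$. A lattice point of $K$ whose first coordinate equals $n$ is precisely a pair $(n,x)$ with $x\in n\Delta\cap\mathbb{Z}^d$. Since the $\hat v_i$ are linearly independent, every lattice point of $K$ lies in a unique translate $w+\sum_i\mathbb{Z}_{\ge 0}\hat v_i$ with $w$ in the half-open parallelepiped $\Pi=\{\sum_i\lambda_i\hat v_i:0\le\lambda_i<1\}$, and $\Pi\cap\mathbb{Z}^{d+1}$ is finite because $\Pi$ is bounded. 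Recording first coordinates in a variable $t$, this tiling yields
\[
\sum_{n\ge 0}|n\Delta\cap\mathbb{Z}^d|\,t^n \ = \ \frac{h(t)}{(1-t)^{d+1}} \, , \qquad h(t)=\sum_{w\in\Pi\cap\mathbb{Z}^{d+1}}t^{w_1} \, ,
\]
where $w_1$ is the first coordinate of $w$; thus $h$ is a polynomial with nonnegative integer coefficients, $h(0)=1$ (from $w=0$), and $\deg h\le d$.

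It remains to extract the coefficients. Expanding $(1-t)^{-(d+1)}=\sum_{n\ge 0}\binom{n+d}{d}t^n$ and multiplying by $h$, the coefficient of $t^n$ equals $\sum_j h_j\binom{n-j+d}{d}$, a polynomial in $n$ of degree exactly $d$, since its leading coefficient is $h(1)/d!$ and $h(1)>0$ (all $h_j\ge 0$ and $h_0=1$). This establishes the simplex case with the stated degree. Plugging it back into the triangulation formula, each top-dimensional simplex contributes a polynomial of degree $d=\dim P$ whose leading coefficient is its positive normalized volume, so no cancellation occurs and the total has degree exactly $\dim P$, while the lower-dimensional faces contribute strictly smaller degree. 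Thus $E_P(n):=|nP\cap\mathbb{Z}^d|$ is a polynomial of degree $\dim P$, as claimed.

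The single robust computation here is the cone decomposition for one simplex; the step needing genuine care is reassembling $P$ from the triangulation without mis-counting lattice points on shared faces, which is exactly what the decomposition into relative interiors together with M\"obius inversion on simplex face lattices handles (one could alternatively route this through Ehrhart--Macdonald reciprocity). The only other bookkeeping points are the existence of a triangulation of $P$ introducing no new vertices (standard, e.g.\ a pulling triangulation) and the reduction to full dimension via the induced lattice on $\aff(P)$.
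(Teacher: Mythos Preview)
The paper does not actually prove this statement: Theorem~\ref{thm:Ehrhart} is quoted as a classical result of Ehrhart and used as a black box, so there is no ``paper's proof'' to compare against. Your argument is the standard Stanley--Ehrhart proof---triangulate into lattice simplices, lift each simplex $\Delta$ to a simplicial cone in $\mathbb{R}^{d+1}$, tile that cone by translates of its half-open fundamental parallelepiped to obtain $\sum_{n\ge 0}|n\Delta\cap\mathbb{Z}^d|\,t^n=h(t)/(1-t)^{d+1}$ with $h$ a polynomial of degree at most $d$ and $h(1)>0$, and reassemble via the partition of $P$ into relative interiors of faces of the triangulation---and it is correct.

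Two minor cosmetic remarks. First, you announce you will handle the simplex case ``by induction on $\dim\Delta$'' but then (rightly) give a direct proof via the cone argument that works in all dimensions at once; the mention of induction is superfluous. Second, the reassembly paragraph is a little terse: you pass from relative-interior counts back to closed-simplex counts via M\"obius inversion on each simplex's Boolean face lattice, which is fine, but the degree claim really only needs that each top-dimensional cell $\sigma$ contributes leading term $\vol(\sigma)\,n^{\dim P}$ to $|\relint(n\sigma)\cap\mathbb{Z}^d|$ (lower faces having strictly smaller degree), so that the total leading coefficient is $\vol(P)>0$. You say this, but it is worth stating that this positivity is what rules out cancellation.
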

The polynomial $E_P (n)=E_0 (P)+E_1(P)n+\cdots + E_{\dim P}(P)n^{\dim P}$ is called the \textbf{Ehrhart polynomial} of $P$. In this section we show that the linear coefficient $E_1(P)$ of the Ehrhart polynomial of every generalized permutahedra $P$ with vertices in the integer lattice is nonnegative. This has independently been proved by Castillo and Liu~\cite{castillo2019todd}. In \cite{MinkowskiValuations}, the authors make the useful observation that the linear coefficient is additive under taking Minkowski sums of lattice polytopes.
\begin{lemma}[{\cite[Corollary 23]{MinkowskiValuations}}]\label{lem:linearityEhr}
Let $P$ and $Q$ be lattice polytopes and $k,\ell\geq 0$ be integers. Then
\[
E_1(kP+\ell Q)=kE_1(P)+\ell E_1(Q) \, .
\]
\end{lemma}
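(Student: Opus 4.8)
The plan is to deduce additivity of $E_1$ from the joint polynomiality of lattice-point counts in Minkowski combinations. First I would invoke the classical theorem of McMullen~\cite{McMullen}: for lattice polytopes $P,Q\subset\mathbb{R}^d$ there is a polynomial $G\in\mathbb{Q}[s,t]$ with
\[
G(k,\ell)\ =\ \bigl|(kP+\ell Q)\cap\mathbb{Z}^d\bigr| \qquad\text{for all integers } k,\ell\geq 0.
\]
I would then record its decomposition $G=\sum_{j=0}^{d}G^{(j)}$ into homogeneous parts, $G^{(j)}$ homogeneous of degree $j$. Specializing one variable to $0$, and using that two polynomials agreeing on all nonnegative integers coincide, one gets $G(s,0)=E_P(s)$ and $G(0,t)=E_Q(t)$ as polynomials; these identities will pin down $G^{(1)}$ at the end.

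Next I would link $G$ to the Ehrhart polynomial of $kP+\ell Q$. Fix integers $k,\ell\geq0$. Then $kP+\ell Q$ is a lattice polytope, so by Theorem~\ref{thm:Ehrhart} it has an Ehrhart polynomial, and since $n(kP+\ell Q)=(nk)P+(n\ell)Q$ we have, for every integer $n\geq1$,
\[
E_{kP+\ell Q}(n)\ =\ \bigl|n(kP+\ell Q)\cap\mathbb{Z}^d\bigr|\ =\ G(nk,n\ell).
\]
Both outer expressions are polynomials in $n$ that agree for all $n\geq1$, hence are equal in $\mathbb{Q}[n]$. Substituting the homogeneous decomposition gives $G(nk,n\ell)=\sum_{j}n^{j}G^{(j)}(k,\ell)$, so comparing the coefficient of $n^{1}$ on both sides yields $E_1(kP+\ell Q)=G^{(1)}(k,\ell)$.

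It then remains to identify the linear form $G^{(1)}$. Being homogeneous of degree $1$ it has the shape $G^{(1)}(s,t)=\alpha s+\beta t$. In $G(s,0)=\sum_j G^{(j)}(s,0)$ each term $G^{(j)}(s,0)$ is a scalar multiple of $s^{j}$, so the coefficient of $s^{1}$ in $G(s,0)$ equals $\alpha$; comparing with $G(s,0)=E_P(s)$ gives $\alpha=E_1(P)$, and symmetrically $\beta=E_1(Q)$. Hence $E_1(kP+\ell Q)=G^{(1)}(k,\ell)=kE_1(P)+\ell E_1(Q)$, as desired.

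The only substantive ingredient is McMullen's multivariate polynomiality statement, so the step to be careful about is invoking it in precisely the form needed --- joint polynomiality of $\lvert(kP+\ell Q)\cap\mathbb{Z}^d\rvert$ on $\mathbb{Z}_{\geq0}^{2}$, i.e., the existence of the ``mixed Ehrhart polynomial'' $G$ --- after which the degree-one bookkeeping above is routine. If one preferred a self-contained argument, one could instead expand $\lvert(kP+\ell Q)\cap\mathbb{Z}^d\rvert$ by inclusion--exclusion over a common refinement of the normal fans and track only the contributions of cones of codimension at most $1$; but this is more laborious and essentially re-proves the relevant special case of McMullen's theorem.
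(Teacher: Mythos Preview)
Your proof is correct. The paper itself does not supply a proof of this lemma, citing instead \cite[Corollary 23]{MinkowskiValuations}; however, in Section~4.3 the authors note that the cited proof proceeds via the Bernstein--McMullen Theorem (Theorem~\ref{thm:Bernstein-McMullen}), which is exactly your approach: invoke joint polynomiality of $\lvert (kP+\ell Q)\cap\mathbb{Z}^d\rvert$, substitute $(nk,n\ell)$, and extract the degree-one homogeneous part.
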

Let $\mathcal{E}\colon \mathcal{P}_d\rightarrow \mathbb{R}$ be the symmetric Minkowski linear functional defined by
\[
\mathcal{E} (\Delta _{i+1}) \ = \ 1+\frac{1}{2}+\cdots + \frac{1}{i} = : h_i
\]
for all $1\leq i\leq d-1$. Then $\mathcal{E}$ agrees with $E_1$ on all generalized permutahedra that are lattice polytopes.
\begin{proposition}\label{prop:agreesEhr}
Let $P\in \mathcal{P}_d$ be a generalized permutahedron with vertices in the integer lattice. Then $\mathcal{E}(P)=E_1 (P)$.
\end{proposition}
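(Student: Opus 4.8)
The plan is to reduce the statement to a finite check on the simplices $\Delta_{i+1}$, using three facts: that $E_1$ is Minkowski-linear on lattice generalized permutahedra (Lemma~\ref{lem:linearityEhr}), that $\mathcal{E}$ is Minkowski-linear by construction, and that any lattice generalized permutahedron $P$ has, by Proposition~\ref{prop:Ardila}, a unique signed Minkowski decomposition $P=\sum_I y_I\Delta_I$. The key point to establish is that it suffices to know $E_1(\Delta_{i+1})=h_i$ for the \emph{standard} simplices; everything else follows by linearity, provided we are careful that the decomposition is \emph{signed} and $E_1$ is only additive, not subtractive a priori.

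First I would compute $E_1(\Delta_{i+1})$ directly. The dilate $n\Delta_{i+1}$ is (a unimodular image of) the standard simplex $\{x\in\mathbb{R}_{\geq 0}^{i}\colon x_1+\cdots+x_i\leq n\}$, whose Ehrhart polynomial is $\binom{n+i}{i}=\frac{(n+1)(n+2)\cdots(n+i)}{i!}$. Extracting the linear coefficient in $n$ gives $E_1(\Delta_{i+1})=\frac{1}{i!}e_{i-1}(1,2,\ldots,i)$, where $e_{i-1}$ is the elementary symmetric polynomial; since $e_{i-1}(1,2,\ldots,i)=i!\bigl(1+\tfrac12+\cdots+\tfrac1i\bigr)$ (as $\frac{e_{i-1}(1,\ldots,i)}{i!}=\sum_{k=1}^i\frac1k$), this equals $h_i$. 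Hence $\mathcal{E}(\Delta_{i+1})=E_1(\Delta_{i+1})$ for every $i$, and also $\mathcal{E}(\Delta_{\{i\}})=0=E_1(\mathrm{point})$, so $\mathcal{E}$ and $E_1$ agree on all standard simplices $\Delta_I$ (using symmetry of $E_1$ under coordinate permutations, which holds because unimodular transformations preserve Ehrhart polynomials).

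Next I would deduce agreement on all of $\mathcal{P}_d\cap\{\text{lattice polytopes}\}$. Write $P=Q-R$ with $Q=\sum_{y_I\geq 0}y_I\Delta_I$ and $R=\sum_{y_I<0}(-y_I)\Delta_I$, so $P+R=Q$ as an honest Minkowski sum of lattice polytopes (the $y_I$ need not be integers, but one can clear denominators: pass to a common multiple $N$ so that $NP$, $NR$, $NQ$ are lattice polytopes with integer coefficients, apply Lemma~\ref{lem:linearityEhr} to $NP+NR=NQ$ to get $E_1(NP)+E_1(NR)=E_1(NQ)$, then use $E_1(Nk\,\text{anything})$-scaling — more precisely, $E_1$ on a polytope scales linearly, so $E_1(NP)=N\,E_1(P)$ is \emph{not} quite right since $E_1$ is a coefficient, not a homogeneous functional; instead note $E_{NP}(n)=E_P(Nn)$ gives $E_1(NP)=N E_1(P)$, which \emph{is} correct). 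Then $E_1(P)=E_1(Q)-E_1(R)$, and similarly $\mathcal{E}(P)=\mathcal{E}(Q)-\mathcal{E}(R)$ by Minkowski-linearity of $\mathcal{E}$. Since $Q$ and $R$ are genuine nonnegative Minkowski combinations of standard simplices, Lemma~\ref{lem:linearityEhr} (iterated) gives $E_1(Q)=\sum_{y_I\geq 0}y_I E_1(\Delta_I)=\sum_{y_I\geq 0}y_I\mathcal{E}(\Delta_I)=\mathcal{E}(Q)$ and likewise for $R$; here the coefficients $y_I$ are rational, which is fine after the clearing-denominators step. Subtracting yields $E_1(P)=\mathcal{E}(P)$.

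The main obstacle I anticipate is bookkeeping with non-integer coefficients: Lemma~\ref{lem:linearityEhr} is stated only for integer $k,\ell$ and lattice polytopes, whereas the $y_I$ in Proposition~\ref{prop:Ardila} are merely real (in fact rational for a lattice $P$, since they are obtained from the $z_I\in\mathbb{Z}$ by Möbius inversion, hence integers — this actually simplifies things). So the cleanest route is: observe $z_I=\min_{x\in P}\sum_{i\in I}x_i\in\mathbb{Z}$ because $P$ is a lattice polytope and these minima are attained at vertices, hence $y_I=\sum_{J\subseteq I}(-1)^{|I|-|J|}z_J\in\mathbb{Z}$; then $Q=\sum_{y_I\geq 0}y_I\Delta_I$ and $R=\sum_{y_I<0}(-y_I)\Delta_I$ are lattice polytopes expressed as \emph{integer} Minkowski sums of the lattice polytopes $\Delta_I$, Lemma~\ref{lem:linearityEhr} applies verbatim and iterates, and the rest is the linearity argument above. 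I would also double-check the identity $e_{i-1}(1,2,\ldots,i)=i!\,h_i$ (equivalently, that the coefficient of $n$ in $\binom{n+i}{i}$ is $h_i/1$ after dividing by... — concretely, $\binom{n+i}{i}=\frac{1}{i!}\prod_{k=1}^i(n+k)$ has $n$-coefficient $\frac{1}{i!}\sum_{k=1}^i\prod_{\ell\neq k}\ell=\frac{1}{i!}\sum_{k=1}^i\frac{i!}{k}=h_i$), which is the one genuinely computational input.
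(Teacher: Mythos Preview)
Your proposal is correct and follows essentially the same approach as the paper: compute $E_1(\Delta_{i+1})=h_i$ from the Ehrhart polynomial $\binom{n+i}{i}$, observe that a lattice generalized permutahedron has integer coefficients $y_I$ in its signed Minkowski decomposition, invoke the $S_d$-invariance of $E_1$, and conclude via the Minkowski additivity of $E_1$ (Lemma~\ref{lem:linearityEhr}). You are more explicit than the paper on two points it leaves implicit---you justify the integrality of the $y_I$ directly via $z_I\in\mathbb{Z}$ and M\"obius inversion (the paper cites \cite[Proposition~2.3]{Ardila}), and you spell out how the \emph{signed} sum is handled by writing $P+R=Q$ and applying additivity to this genuine Minkowski sum---but the argument is the same.
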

\begin{proof}
We recall that for all $1\leq i\leq d-1$
\[
E _{\Delta _{i+1}}(n) \ = \ \left\{\mathbf{x}\in \mathbb{R}^{i+1}\times \{0\}^{d-i-1}\colon \sum _{k=1}^d x_k =n\right\} \ = {n+i\choose i} \, .
\]
In particular, $E_1 (\Delta_{i+1})= 1+\frac{1}{2}+\cdots + \frac{1}{i}=\mathcal{E}(\Delta_{i+1})$. It follows from \cite[Proposition 2.3]{Ardila} that every generalized permutahedron that is a lattice polytope is a signed Minkowski sum of standard simplices $\Delta _I$ with integer coefficients. Furthermore, $E_P(n)$ and therefore $E_1(n)$ is invariant under permutations of the coordinates. Thus, the claim follows from Lemma~\ref{lem:linearityEhr}.
\end{proof}
Thus, to prove that $E_1(P)$ is always nonnegative for any generalized permutahedron $P$, by Theorem~\ref{thm:symcomb}, we are left to prove that $\mathcal{E}=\sum _{k=1}^{d-1} c_kf_k$ for nonnegative real numbers $c_1,\ldots, c_{d-1}$. Let $A=(a_{ik})=(f_1,\ldots,f_{d-1})\in \mathbb{R}^{d-1}\times \mathbb{R}^{d-1}$ be the matrix with column vectors $f_1,\ldots,f_{d-1}$. Then
\[
a_{ik} \ = \ {i+1 \choose 2}{d-i-1 \choose k-i}
\]
and
\[
c \ = \ A^{-1}h \,  ,
\]
where $h=(h_1,\ldots,h_{d-1})^T$.
\begin{lemma}
\[
A^{-1} \ = \ \frac{(-1)^{k+j}}{{j+1\choose 2}}{d-k-1\choose j-k} \ =: \ (b_{kj})=B
\]
\end{lemma}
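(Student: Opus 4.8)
The plan is to verify directly that the product $AB$ is the $(d-1)\times(d-1)$ identity matrix; since $A$ and $B$ are square of the same size, this already gives $A^{-1}=B$. Writing out the $(i,j)$ entry and pulling the factors that do not depend on the summation index $k$ out of the sum yields
\[
(AB)_{ij} \ = \ \sum_{k}a_{ik}b_{kj} \ = \ \frac{\binom{i+1}{2}}{\binom{j+1}{2}}\sum_{k}(-1)^{k+j}\binom{d-i-1}{k-i}\binom{d-k-1}{j-k}\, ,
\]
where the sum may be taken over all integers $k$ since the two binomial coefficients already force $i\le k\le j$.

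First I would reindex the inner sum by setting $k=i+m$ and abbreviating $n:=d-i-1$ and $r:=j-i$, turning it into $(-1)^{i+j}\sum_{m}(-1)^{m}\binom{n}{m}\binom{n-m}{r-m}$. Next I would apply the subset-of-a-subset identity $\binom{n}{m}\binom{n-m}{r-m}=\binom{n}{r}\binom{r}{m}$ to factor out $\binom{n}{r}$, leaving $\binom{n}{r}\sum_{m}(-1)^{m}\binom{r}{m}=\binom{n}{r}(1-1)^{r}$ by the binomial theorem. This equals $1$ exactly when $r=0$ and vanishes when $r\ge 1$; and for $r<0$, i.e. $j<i$, the original sum is empty and hence also $0$. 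Consequently the inner sum equals $(-1)^{i+j}\,[\,i=j\,]$, and since the prefactor $\binom{i+1}{2}/\binom{j+1}{2}$ equals $1$ on the diagonal we conclude $(AB)_{ij}=\delta_{ij}$, as desired.

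The argument is essentially a routine finite binomial-sum identity, so I do not anticipate a genuine obstacle. The only point needing care is the bookkeeping of summation ranges: one must check that the binomial coefficients really do vanish outside $i\le k\le j$ so that the sum can legitimately be treated as running over $\mathbb{Z}$ and that the subset-of-a-subset identity applies term by term, and one must separately note the degenerate cases $j<i$ (empty sum) and $r=0$ (where $(1-1)^{r}=1$ rather than $0$). Once these conventions are fixed the computation goes through without incident, and in particular it simultaneously recovers the shared upper-triangular structure of $A$ and $B$.
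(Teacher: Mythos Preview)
Your proof is correct and takes essentially the same approach as the paper: both compute $(AB)_{ij}$ directly, apply the subset-of-a-subset identity $\binom{n}{m}\binom{n-m}{r-m}=\binom{n}{r}\binom{r}{m}$ (with $n=d-i-1$, $r=j-i$, $m=k-i$), and then collapse the remaining alternating sum via $(1-1)^{j-i}$. The only cosmetic difference is that you reindex explicitly with $m,n,r$, whereas the paper keeps the original indices throughout.
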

\begin{proof}
We calculate
\begin{eqnarray*}
(AB)_{ij} \ &=& \ \sum _{k=1}^{d-1} {i+1 \choose 2}{d-i-1 \choose k-i}\frac{(-1)^{k+j}}{{j+1\choose 2}}{d-k-1\choose j-k}\\
&=&\frac{{i+1 \choose 2}}{{j+1\choose 2}}\sum_{k=1}^{d-1}(-1)^{k+j}{d-1-i \choose j-i}{j-i\choose k-i}\\
&=&(-1)^{j-i}{d-1-i \choose j-i}\frac{{i+1 \choose 2}}{{j+1\choose 2}}\sum _{k=1}^{d-1}(-1)^{k-i}{j-i\choose k-i}\\
&=&(-1)^{j-i}{d-1-i \choose j-i}\frac{{i+1 \choose 2}}{{j+1\choose 2}}(1-1)^{j-i}\\
&=& \begin{cases} 1 & \text{ if } j=i \, .\\
0 & \text{ otherwise. }
\end{cases}
\end{eqnarray*}
That is, $AB=I_{d-1}$ and thus $A$ is invertible with inverse equal to $B$.
\end{proof}
\begin{theorem}\label{thm:Ehrhartpositive}
Let $P\in \mathcal{P}_d$ be a generalized permutahedron. Then $\mathcal{E}(P)\geq 0$.
\end{theorem}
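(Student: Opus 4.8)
The plan is to deduce positivity of $\mathcal{E}$ from the classification in Theorem~\ref{thm:symcomb} rather than to argue about $\mathcal{P}_d$ directly. Since $\mathcal{E}$ is symmetric and translation-invariant, Theorem~\ref{thm:symcomb} tells us that $\mathcal{E}(P)\ge 0$ for all $P\in\mathcal{P}_d$ if and only if the coefficients in the expansion $\mathcal{E}=\sum_{k=1}^{d-1}c_kf_k$ are all nonnegative; and by the previous lemma $A^{-1}=B$, so $c=A^{-1}h=Bh$, that is,
\[
c_k \ = \ \sum_{j=k}^{d-1}\frac{(-1)^{k+j}}{\binom{j+1}{2}}\binom{d-k-1}{j-k}h_j .
\]
Thus the whole problem is reduced to the explicit, finite task of showing $c_k\ge0$ for each $k\in\{1,\dots,d-1\}$. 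The difficulty is that this is an alternating sum of harmonic numbers with binomial weights over a quadratic denominator, so its sign is not apparent.

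First I would tidy the sum up: substituting $i=j-k$, writing $n=d-k-1$, and using $\binom{j+1}{2}=\tfrac12(k+i)(k+i+1)$ gives
\[
c_k \ = \ 2\sum_{i=0}^{n}(-1)^i\binom{n}{i}\,\frac{h_{k+i}}{(k+i)(k+i+1)} .
\]
The key step is to feed into this an integral representation of the summand against which the alternating binomial sum collapses by the binomial theorem. Concretely, I would prove the elementary identity
\[
\frac{h_m}{m(m+1)} \ = \ \int_0^1 x^{m-1}(1-x)\bigl(-\ln(1-x)\bigr)\,dx \ + \ \int_0^1 x^{m}\bigl(-\ln x\bigr)\,dx
\]
valid for every integer $m\ge1$; it follows from the standard evaluations $\int_0^1 x^{m-1}(-\ln(1-x))\,dx=h_m/m$ and $\int_0^1 x^{m-1}(-\ln x)\,dx=1/m^2$ together with $h_m=h_{m+1}-\tfrac1{m+1}$.

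Finally I would set $m=k+i$, interchange the finite sum with the two integrals, and apply the binomial theorem in the forms $\sum_{i=0}^n(-1)^i\binom{n}{i}x^{k+i-1}=x^{k-1}(1-x)^n$ and $\sum_{i=0}^n(-1)^i\binom{n}{i}x^{k+i}=x^{k}(1-x)^n$. This yields
\[
c_k \ = \ 2\int_0^1 x^{k-1}(1-x)^{n+1}\bigl(-\ln(1-x)\bigr)\,dx \ + \ 2\int_0^1 x^{k}(1-x)^{n}\bigl(-\ln x\bigr)\,dx ,
\]
with $n=d-k-1\ge0$. On $(0,1)$ we have $-\ln(1-x)\ge0$ and $-\ln x\ge0$ and all remaining factors are nonnegative, so both integrands are nonnegative and not identically zero; hence in fact $c_k>0$. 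Therefore $\mathcal{E}=\sum_{k=1}^{d-1}c_kf_k$ with positive coefficients, and Theorem~\ref{thm:symcomb} gives $\mathcal{E}(P)\ge0$ for all $P\in\mathcal{P}_d$. The only genuinely new ingredient is the harmonic-number integral identity; finding the representation that both collapses the alternating sum and is manifestly nonnegative is the main (and really the only) obstacle, and everything else is routine manipulation.
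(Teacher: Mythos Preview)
Your argument is correct. The reduction to $c_k\ge0$ via Theorem~\ref{thm:symcomb} and the lemma $A^{-1}=B$ is exactly what the paper does, and your integral identity
\[
\frac{h_m}{m(m+1)}=\int_0^1 x^{m-1}(1-x)\bigl(-\ln(1-x)\bigr)\,dx+\int_0^1 x^m\bigl(-\ln x\bigr)\,dx
\]
checks out (from $\int_0^1 x^{m-1}(-\ln(1-x))\,dx=h_m/m$ and $\int_0^1 x^{m}(-\ln x)\,dx=1/(m+1)^2$, together with $h_{m+1}=h_m+\tfrac1{m+1}$). The binomial collapse and the positivity conclusion are then immediate.

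The paper's proof follows the same overall strategy but with a simpler integral representation of the harmonic number alone, $h_j=\int_0^1\frac{1-t^j}{1-t}\,dt$, which gives $c_k=\int_0^1\frac{p_k(1)-p_k(t)}{1-t}\,dt$ for $p_k(t)=\sum_{j} b_{kj}t^j$; nonnegativity then reduces to showing $p_k$ is nondecreasing on $[0,1]$, which is established by computing that $\bigl(\tfrac{t^2}{2}p_k'(t)\bigr)'=t^k(1-t)^{d-k-1}$. So both arguments hinge on an integral representation plus a binomial collapse to a manifestly nonnegative expression. You absorb the factor $1/\binom{j+1}{2}$ into the integral identity up front and obtain positivity in a single step, at the cost of a slightly more delicate identity; the paper keeps the identity elementary but needs the extra derivative step. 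Neither approach is materially shorter or deeper than the other---they are two cuts through the same computation.
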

\begin{proof}
We consider the polynomial 
\[
p_k = \sum_{j = k}^{d-1} b_{kj}t^{j}
\]
and observe that
\[\int_{0}^1\dfrac{p_k(1)- p_k(t)}{1-t}dt = \int_{0}^1\sum_{j=k}^{d-1} b_{kj} \left(1 + t + \ldots + t^{j-1}\right)dt = (Bh)_k =c_k\]
which we need to show is nonnegative. It therefore suffices to show that
\[p_k'(t) \geq 0 \]
for all $t \in [0,1]$.
Let
\[
q_k(t) = \dfrac{t^2p_k'(t)}{2} = \sum_{j=k}^{d-1} (-1)^{k+j} \binom{d-k-1}{j-k} \dfrac{t^{j+1}}{j+1} \, .
\]
Then 
\[
q_k'(t) = \sum_{j=k}^{d-1} (-1)^{k+j} \binom{d-k-1}{j-k}t^j =  \sum_{\ell=0}^{d-1-k} (-1)^{\ell} \binom{d-k-1}{\ell}t^{\ell+k} \, .
\]
We conclude by observing that
\[q_k(t)  = \int_0^t q_k'(t)dt\]
and
\[q_k'(t) = t^{k}(1-t)^{d-k-1}\]
which is nonnegative for all $t\in [0,1]$.
\end{proof}

An important subclass of generalized permutahedra consists of polytopes that can be written as Minkowski sums of standard simplices. Postnikov~\cite[Theorem 11.3]{Postnikov} gave a combinatorial formula for the number of lattice points in generalized permutahedra contained in this subclass that shows Ehrhart positivity in this case (see Equation~\eqref{eq:posPostnikov} below). In the remainder of this section we will see that this formula extends to signed Minkowski sums and thus to arbitrary generalized permutahedra.

A \textbf{valuation} on lattice polytopes is a function $\varphi$ such that 
\[
\varphi (P\cup Q) = \varphi (P)+\varphi (Q)-\varphi (P\cap Q) 
\]
for all lattice polytopes $P,Q$ such that $P\cup Q$ (and thus also $P\cap Q$) are lattice polytopes. A valuation $\varphi$ is called $\mathbf{translation-invariant}$ if $\varphi (P+t)=\varphi (P)$ for all lattice polytopes $P$ and all $t$ in the integer lattice. The volume and the number of lattice points in a lattice polytope present examples of such valuations. A multivariate version of Theorem~\ref{thm:Ehrhart} was proved by Bernstein~\cite{Bernstein} for the number of lattice points in Minkowski sums of lattice polytopes, and, more generally, by McMullen
~\cite{McMullen} for arbitrary translation-invariant valuations on lattice polytopes. The following result is often referred to as the Bernstein-McMullen theorem.

\begin{theorem}[{\cite[Theorem 6]{McMullen}}]\label{thm:Bernstein-McMullen}
Let $P_1,\ldots, P_k$ be lattice polytopes and let $\varphi$ be a translation-invariant valuation. Then $\varphi (n_1P_1+n_2P_2+\cdots +n_kP_k)$ agrees with a polynomial $\varphi _{P_1,\ldots, P_k}(n_1,\ldots, n_k)$ of total degree at most $\dim (P_1+\cdots +P_k)$ for all integers $n_1,\ldots, n_k\geq 0$.
\end{theorem}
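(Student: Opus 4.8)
The plan is to deduce this from the structure of McMullen's \emph{polytope algebra}, where it becomes a purely formal consequence of a weight grading. Let $D=\dim(P_1+\cdots+P_k)$ and fix the affine lattice $L$ spanned by $P_1+\cdots+P_k$, so that every Minkowski sum $\sum_i m_iP_i$ with $m_i\in\mathbb{Z}_{\ge 0}$ is (a translate of) a lattice polytope in the $D$-dimensional space underlying $L$. Let $\Pi$ be the abelian group generated by symbols $[P]$, one for each lattice polytope $P$ in this space, modulo $[P\cup Q]+[P\cap Q]=[P]+[Q]$ (whenever $P,Q,P\cup Q$ are lattice polytopes) and $[P+t]=[P]$ for lattice vectors $t$; with the product $[P]\cdot[Q]=[P+Q]$ this is a commutative ring with unit $\mathbf 1=[\{0\}]$. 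By construction $\Pi$ is universal for translation-invariant valuations: any such $\varphi$ factors as $\varphi(P)=\bar\varphi([P])$ for a group homomorphism $\bar\varphi\colon\Pi\to\mathbb{R}$. Hence it suffices to prove that the element $[n_1P_1+\cdots+n_kP_k]\in\Pi$ is a polynomial in $(n_1,\ldots,n_k)\in\mathbb{Z}_{\ge0}^k$ of total degree at most $D$, and then apply $\bar\varphi$.

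The input I would rely on -- and the main obstacle -- is the weight decomposition of $\Pi\otimes\mathbb{Q}$: a grading $\Pi\otimes\mathbb{Q}=\bigoplus_{r=0}^{D}\Pi_r$ with $\Pi_r\cdot\Pi_s\subseteq\Pi_{r+s}$ such that, writing $[P]=\sum_{r=0}^{\dim P}\Xi_r(P)$ with $\Xi_r(P)\in\Pi_r$, one has the dilation law $\Xi_r(nP)=n^r\,\Xi_r(P)$ for all integers $n\ge0$. Following McMullen, one proves this by first showing that $[P]-\mathbf 1$ is nilpotent in $\Pi$ -- inducting on dimension via the exact sequences that relate the polytope group of a $D$-dimensional affine lattice to those of lower-dimensional ones -- so that $\ell(P):=\log[P]=\sum_{m\ge1}\frac{(-1)^{m-1}}{m}([P]-\mathbf 1)^m$ is a finite sum; the dilation map $\delta_n\colon[P]\mapsto[nP]$ is then a unital ring endomorphism, whence $\delta_n\,\ell(P)=\ell(nP)$, and one checks $\ell(nP)=n\,\ell(P)$ directly, so the $\delta_n$ are simultaneously diagonalisable with $\delta_n=\sum_r n^r\pi_r$ for idempotents $\pi_r$. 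The weight spaces $\Pi_r:=\operatorname{im}\pi_r$ (and $\Xi_r:=\pi_r\circ[\,\cdot\,]$) then have the stated properties, and nilpotency of $[P]-\mathbf 1$ within a $D$-dimensional lattice forces $\Pi_r=0$ for $r>D$.

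Granting this, the conclusion is immediate. Minkowski addition is multiplicative in $\Pi$, so
\[
\bigl[\,n_1P_1+\cdots+n_kP_k\,\bigr]\;=\;\prod_{i=1}^{k}[\,n_iP_i\,]\;=\;\prod_{i=1}^{k}\Bigl(\sum_{r=0}^{\dim P_i} n_i^{\,r}\,\Xi_r(P_i)\Bigr),
\]
which is a polynomial in $n_1,\ldots,n_k$ with coefficients in $\Pi$; the coefficient of $n_1^{r_1}\cdots n_k^{r_k}$ is $\prod_i\Xi_{r_i}(P_i)\in\Pi_{r_1+\cdots+r_k}$, and since the entire computation takes place inside the polytope algebra of the $D$-dimensional space, this vanishes as soon as $r_1+\cdots+r_k>D$. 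Thus the total degree is at most $D=\dim(P_1+\cdots+P_k)$, and applying the homomorphism $\bar\varphi$ attached to $\varphi$ yields the desired polynomial $\varphi_{P_1,\ldots,P_k}$.

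For completeness I would also record the more elementary but longer alternative: induct on $D$, using that a function $\mathbb{Z}_{\ge0}^k\to\mathbb{R}$ all of whose $(D+1)$-st iterated first differences vanish is a polynomial of total degree $\le D$, and expressing each first difference of $g(\mathbf n):=\varphi(n_1P_1+\cdots+n_kP_k)$ through the translation-invariant valuations induced by $\varphi$ on the proper faces of $P_1+\cdots+P_k$, which live in lower-dimensional affine lattices and to which the inductive hypothesis applies. The delicate point there -- a discrete Stokes-type identity for valuations -- is precisely what the polytope-algebra packaging absorbs, which is why I would present the argument via $\Pi$.
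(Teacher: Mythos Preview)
The paper does not prove this theorem at all: it is quoted from McMullen's 1977 paper and used as a black box, so there is no proof in the paper to compare your proposal against.

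That said, your argument via the polytope algebra is essentially correct, with one caveat worth flagging. You are invoking McMullen's 1989 structure theorem for the polytope algebra (nilpotence of $[P]-\mathbf 1$, the logarithm, the weight grading) to deduce a 1977 result; the weight decomposition is a substantially deeper and later piece of machinery than the statement you are proving, so this is a legitimate but somewhat circular-feeling deduction. Also be careful that McMullen's original polytope algebra is built from \emph{real} polytopes modulo \emph{real} translations; since here $\varphi$ is only assumed invariant under lattice translations, you need the lattice-polytope version of $\Pi$ (translations by $\mathbb{Z}^d$ only), for which the nilpotence of $[P]-\mathbf 1$ and the weight grading still hold but require a separate argument --- this is what your parenthetical about ``the polytope group of a $D$-dimensional affine lattice'' is gesturing at, and it is the genuine content you would have to supply. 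Finally, since you pass to $\Pi\otimes\mathbb{Q}$, you are implicitly using that $\varphi$ is real-valued, which is fine here.

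The ``elementary alternative'' you sketch at the end --- showing that all $(D{+}1)$-st iterated forward differences of $g(\mathbf n)=\varphi(\sum_i n_iP_i)$ vanish by relating first differences to valuations on lower-dimensional faces --- is in fact much closer to McMullen's original 1977 proof and is the self-contained route one would take if asked to prove the theorem rather than cite it.
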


The following extension of Theorem~\ref{thm:Bernstein-McMullen} complements results in~\cite{CombinatorialPositive,jochemko2015PhD} where a multivariate Ehrhart-Macdonald reciprocity was established and generalizes results by Ardila, Benedetti and Doker~\cite[Proposition 3.2]{Ardila} from volumes to translation-invariant valuations using a similar argument.

\begin{proposition}\label{lem:negativeEntries}
Let $P_1,\ldots, P_k, Q_1,\ldots, Q_\ell$ be lattice polytopes and let $n_1,\ldots, n_k, m_1,\ldots, m_\ell\geq 0$ be integers such that $Q=m_1Q_1+\cdots +m_\ell Q_\ell$ is a Minkowski summand of $P=n_1P_1+\cdots+n_kP_k$. Let $\varphi$ be a translation-invariant valuation. Then
\[
\varphi (P-Q)=\varphi _{P_1,\ldots, P_k,Q_1,\ldots, Q_\ell}(n_1,\ldots, n_k,-m_1,\ldots, -m_\ell)\, .
\]
\end{proposition}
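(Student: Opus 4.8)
The plan is to reduce the statement to the case where both $P$ and $Q$ are single polytopes (i.e. $k=\ell=1$ and $n_1=m_1=1$) together with an appropriate "resolution" of the Minkowski-difference relation, and then invoke the polynomiality of $\varphi$ on Minkowski sums from Theorem~\ref{thm:Bernstein-McMullen} as an identity of polynomials. Concretely, since $Q$ is a Minkowski summand of $P$, there is a lattice polytope $R$ (the Minkowski difference $P-Q$, which is a lattice polytope because it is cut out by the same supporting hyperplanes with integer data — here one uses that $P,Q$ are lattice polytopes and Theorem~\ref{thm:Minkowskisummands}) with $R+Q=P$, that is,
\[
R + m_1 Q_1 + \cdots + m_\ell Q_\ell \ = \ n_1 P_1 + \cdots + n_k P_k \, .
\]
The first step is therefore to record that $R$ is a genuine lattice polytope so that $\varphi(R)$ is defined and equals $\varphi(P-Q)$ by definition of the left-hand side.

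Next I would compare two polynomials in the variables $n_1,\dots,n_k,m_1,\dots,m_\ell$. On one hand, by Theorem~\ref{thm:Bernstein-McMullen} applied to the polytopes $R, Q_1,\dots, Q_\ell$, the function
\[
(a_1,\dots,a_\ell) \ \longmapsto \ \varphi\bigl(R + a_1 Q_1 + \cdots + a_\ell Q_\ell\bigr)
\]
agrees, for all nonnegative integers $a_i$, with a polynomial $\psi(a_1,\dots,a_\ell)$. On the other hand, applying Theorem~\ref{thm:Bernstein-McMullen} to $P_1,\dots,P_k,Q_1,\dots,Q_\ell$ gives a polynomial $\varphi_{P_1,\dots,P_k,Q_1,\dots,Q_\ell}$ such that for all nonnegative integers $b_1,\dots,b_k,a_1,\dots,a_\ell$,
\[
\varphi\bigl(b_1 P_1 + \cdots + b_k P_k + a_1 Q_1 + \cdots + a_\ell Q_\ell\bigr) \ = \ \varphi_{P_1,\dots,P_k,Q_1,\dots,Q_\ell}(b_1,\dots,b_k,a_1,\dots,a_\ell) \, .
\]
Now the key identity: using $R + m_1Q_1 + \cdots + m_\ell Q_\ell = n_1P_1 + \cdots + n_kP_k$, for every nonnegative integer $t$ we have
\[
R + (t+1)m_1 Q_1 + \cdots + (t+1)m_\ell Q_\ell \ = \ n_1 P_1 + \cdots + n_k P_k + t m_1 Q_1 + \cdots + t m_\ell Q_\ell \, ,
\]
so $\psi\bigl((t+1)m_1,\dots,(t+1)m_\ell\bigr) = \varphi_{P_1,\dots,P_k,Q_1,\dots,Q_\ell}(n_1,\dots,n_k, tm_1,\dots,tm_\ell)$ for all integers $t\geq 0$. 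Since two polynomials in one variable $t$ agreeing at infinitely many values are equal, this is an identity of polynomials in $t$; evaluating at $t=-1$ gives
\[
\psi(0,\dots,0) \ = \ \varphi_{P_1,\dots,P_k,Q_1,\dots,Q_\ell}(n_1,\dots,n_k,-m_1,\dots,-m_\ell) \, ,
\]
and the left-hand side is $\varphi(R) = \varphi(P-Q)$, which is what we want.

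The main obstacle, and the place requiring care, is the polynomial-identity/substitution step: one must ensure that the single-variable specialization $t$ actually probes the polynomial $\varphi_{P_1,\dots,P_k,Q_1,\dots,Q_\ell}$ along a line on which it is genuinely a polynomial in $t$ (it is, being a polynomial restricted to an affine line) and that $\psi$ composed with the affine map $t\mapsto (t+1)\mathbf m$ is likewise polynomial in $t$ — both are automatic from Theorem~\ref{thm:Bernstein-McMullen}, but the logic of "agree on nonnegative integers $\Rightarrow$ agree as polynomials $\Rightarrow$ agree at $t=-1$" should be spelled out. A secondary point is the first step, justifying that the Minkowski difference $R = P - Q$ of two lattice polytopes is again a lattice polytope; this follows because its vertices arise as intersections of facet hyperplanes of $P$ translated by vertices of $Q$ in the standard way, but if one prefers to avoid this one can instead carry the argument entirely at the level of the valuation-polynomial without ever naming $R$, defining $\varphi(P-Q)$ directly via the displayed substitution — though then the statement $\varphi(P-Q)=\varphi(R)$ becomes a definition rather than a lemma. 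Everything else is a routine application of the Bernstein–McMullen theorem.
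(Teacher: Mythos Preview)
Your argument is correct and is essentially the paper's own proof: both introduce the lattice polytope $R=P-Q$, compare the polynomial $\varphi(R+tQ)$ with $\varphi_{P_1,\dots,P_k,Q_1,\dots,Q_\ell}(n_1,\dots,n_k,(t-1)m_1,\dots,(t-1)m_\ell)$ for $t\ge 1$ via Bernstein--McMullen, and then evaluate the resulting polynomial identity at $t=0$ (your $t=-1$ after a harmless shift). Your extra remarks---that $R$ is a lattice polytope because each vertex $R^u=P^u-Q^u$ is a difference of lattice points, and the explicit justification of the ``equal on $\mathbb Z_{\ge 0}$ $\Rightarrow$ equal as polynomials'' step---are useful clarifications that the paper leaves implicit.
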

\begin{proof}
By Theorem~\ref{thm:Bernstein-McMullen} the number of lattice points in $(P-Q)+tQ$ agrees with a polynomial for all integers $t\geq 0$. Let $f(t)$ denote this polynomial. On the other hand, since $(P-Q)+Q=P$, we obtain
\begin{eqnarray*}
\varphi ((P-Q)+tQ)&=&\varphi (n_1P_1+\cdots n_kP_k+(t-1)m_1Q_1+\cdots +(t-1)m_\ell Q_\ell)\\
&=&\varphi _{P_1,\ldots, P_k,Q_1,\ldots, Q_\ell}(n_1,\ldots, n_k, (t-1)m_1,\ldots, (t-1)m_\ell) 
\end{eqnarray*}
for all $t\geq 1$, again by Theorem~\ref{thm:Bernstein-McMullen}. Since two polynomials which agree infinitely many times must be equal we conclude 
\[
\varphi (P-Q)=f(0)=\varphi_{P_1,\ldots, P_k,Q_1,\ldots, Q_\ell}(n_1,\ldots, n_k,-m_1,\ldots, -m_\ell) \, ,
\]
as desired.
\end{proof}

The following expression for the number of lattice points in generalized permutahedra $\sum _{I\subseteq [d]} y_I \Delta _I$ has been proved by Postnikov~\cite[Theorem 11.3]{Postnikov} in the case when all coefficients $y_I$ are nonnegative integers. Proposition~\ref{lem:negativeEntries} allows us to extend formula~\eqref{eq:posPostnikov} to signed Minkowski sums and thus to all generalized permutahedra.

\begin{corollary}\label{cor:extpostnikov}
For all integer vectors $\{y_I\}_{I\subseteq [d]}$ that satisfy Equations~\eqref{eq:supermodtrans} in Theorem~\ref{thm:chargenperm} 
\begin{equation}\label{eq:posPostnikov}
    \left\lvert\sum _{I\subseteq [d]}y_I\Delta _I \cap \mathbb{Z}^d\right\rvert = \sum _{\mathbf{a}}{y_{[d]}+a_{[d]}\choose a_{[d]}}\prod _{I\subset [d]}{y_{I}+a_{I}-1\choose a_{I}}
\end{equation}
where the sum is over all non negative integer vectors $\{a_I\}_{I\subseteq [d]}$ such that $\sum _{I\subseteq [d]} a_I=d-1$ and for all $M\subseteq 2^{[d]}$ we have
\[
\left\lvert \bigcup _{J\in M} J\right\rvert \geq 1+\sum _{J\in M}a_J \, .
\]
\end{corollary}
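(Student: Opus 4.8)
The plan is to reduce to Postnikov's formula for the case of nonnegative coefficients and then extend it by a polynomiality argument, using Proposition~\ref{lem:negativeEntries} to absorb the negative coefficients. Set $P=\sum_{I\colon y_I\geq 0}y_I\Delta_I$ and $Q=\sum_{I\colon y_I<0}(-y_I)\Delta_I$. By Theorem~\ref{thm:chargenperm}, the hypothesis that $\{y_I\}$ satisfies~\eqref{eq:supermodtrans} is precisely the statement that $Q$ is a Minkowski summand of $P$, so the signed Minkowski sum $\sum_I y_I\Delta_I$ defines the generalized permutahedron $P-Q$. The lattice-point enumerator $L(R)=|R\cap\mathbb{Z}^d|$ is a translation-invariant valuation, so Theorem~\ref{thm:Bernstein-McMullen} provides a polynomial $G$ in the variables $\{x_I\}_{\emptyset\neq I\subseteq[d]}$ with $G(\{n_I\})=|\sum_I n_I\Delta_I\cap\mathbb{Z}^d|$ for all nonnegative integers $n_I$, and applying Proposition~\ref{lem:negativeEntries} to $\varphi=L$, to the polytopes $\Delta_I$, and to the decomposition $\sum_I y_I\Delta_I=P-Q$ gives
\[
\left|\sum_{I\subseteq[d]}y_I\Delta_I\cap\mathbb{Z}^d\right| \ = \ G(\{y_I\}) \, ,
\]
because substituting $-m_j=-(-y_I)=y_I$ into the slot of each negative coefficient simply returns $y_I$ in every coordinate.

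Next I would observe that the right-hand side of~\eqref{eq:posPostnikov}, say $H(\{y_I\})$, is itself a polynomial in $\{y_I\}$: the outer sum runs over a finite set of exponent vectors $\mathbf{a}$ that depends only on $d$, and each summand is a product of the polynomial functions $y_{[d]}\mapsto{y_{[d]}+a_{[d]}\choose a_{[d]}}$ and $y_I\mapsto{y_I+a_I-1\choose a_I}$. Postnikov's formula~\cite[Theorem~11.3]{Postnikov} asserts exactly that $|\sum_I y_I\Delta_I\cap\mathbb{Z}^d|=H(\{y_I\})$ when all $y_I$ are nonnegative integers; together with the previous step this shows that the polynomials $G$ and $H$ agree at every point of $\mathbb{Z}_{\geq 0}^{2^{[d]}\setminus\{\emptyset\}}$. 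Since two polynomials in finitely many variables that agree on this Zariski-dense set must be equal, $G=H$ identically, and therefore $|\sum_I y_I\Delta_I\cap\mathbb{Z}^d|=G(\{y_I\})=H(\{y_I\})$ for every integer vector $\{y_I\}$ satisfying~\eqref{eq:supermodtrans}, which is the assertion.

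I do not expect a genuine obstacle here: the argument is a matter of assembling Theorem~\ref{thm:chargenperm}, Proposition~\ref{lem:negativeEntries}, and Postnikov's theorem correctly. The only points that need attention are confirming that $H$ is manifestly polynomial in the $y_I$ (so that agreement on a dense set forces a polynomial identity) and tracking the signs in Proposition~\ref{lem:negativeEntries} so that each argument comes out as $y_I$ rather than $-y_I$ or $|y_I|$; both become immediate once the index sets are written out explicitly.
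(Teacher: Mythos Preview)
Your argument is correct and follows essentially the same route as the paper: both invoke Theorem~\ref{thm:chargenperm} to interpret the hypothesis as $Q$ being a Minkowski summand of $P$, use the Bernstein--McMullen theorem to see that the lattice-point count is polynomial in the $y_I$, note that the right-hand side of~\eqref{eq:posPostnikov} is manifestly polynomial, and then apply Proposition~\ref{lem:negativeEntries} to extend Postnikov's formula from nonnegative coefficients to signed ones. Your explicit appeal to Zariski-density is just a slightly more detailed phrasing of the same polynomial-identity step the paper uses.
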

\begin{proof}
By the Bernstein-McMullen Theorem~\ref{thm:Bernstein-McMullen}, the number of lattice points in $\sum _{I\subseteq [d]} y_I \Delta _I$ is given by a polynomial for all integers $y_I\geq 0$. Indeed, the right hand side of Equation~\eqref{eq:posPostnikov}, which was proved in~\cite[Theorem 11.3]{Postnikov} in this case, is a polynomial in the coefficients $y_I$, $I\subseteq [d]$, since
\[
{x\choose k}=1/k!\cdot x \cdot (x-1)\cdots (x-k+1) \, 
\]
for all nonnegative integers $x\geq 0$. By Theorem~\ref{thm:chargenperm}, an integer vector $\{y_I\}_{I\subseteq [d]}$ satisfies the Equations~\eqref{eq:supermodtrans} if and only if $\sum _{I\subseteq [d]} y_I \Delta _I$ defines a generalized permutahedra and this holds if and only if $Q:=\sum _{I\colon y_I<0}(-y_I)\Delta _I$ is a Minkowski summand of $P:=\sum _{I\colon y_I\geq 0}y_I\Delta _I$. Thus, by Proposition~\ref{lem:negativeEntries}, the polynomial expression for the number of lattice points in $\sum _{I\subseteq [d]} y_I \Delta _I$ given by Equation~\eqref{eq:posPostnikov} extends to all vectors $\{y_I\}_{I\subseteq [d]}$ satisfying the Equations~\eqref{eq:supermodtrans}.
\end{proof}

\subsection{Matroid polytopes} 
In this section we apply our results to matroid polytopes and matroid independent set polytopes to obtain inequalities for the beta invariant of a matroid. Let $M$ be a matroid on a groundset $E$ with rank function $r$. The \textbf{matroid polytope} $P_M$ is a polytope that is defined as the convex hull of all indicator functions of bases of $M$. The \textbf{beta invariant}~\cite{betainvariant} of $M$ is defined as 
\[
\beta (M) \ = \ (-1)^{r(M)}\sum _{X\subseteq E}(-1)^{|X|}r(X) \, .
\]
In~\cite{Ardila} a signed version, the \textbf{signed beta invariant}, 
\[
\tilde{\beta}(M) \ = \ (-1)^{r(M)+1}\beta (M) \, 
\]
was introduced in order to express the matroid polytope as a signed Minkowski sum of standard simplices 

\begin{proposition}[{\cite{Ardila}}]\label{prop:matroids}
Let $M$ be a matroid of rank $r$ on $E$ and let $P_M$ be its matroid polytope. Then
\[
P_M \ = \ \sum _{A\subseteq E} \tilde{\beta}(M/A)\Delta _{E-A} \, .
\]
\end{proposition}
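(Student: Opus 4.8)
The plan is to compute the coefficients in the unique signed Minkowski sum expansion of $P_M$ guaranteed by Proposition~\ref{prop:Ardila}, and to recognize them, after M\"obius inversion, as the signed beta invariants of the contractions of $M$. Throughout I identify $E$ with $[d]$, $d=|E|$, and write $x(I):=\sum_{i\in I}x_i$.

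First I would check that $P_M$ is a generalized permutahedron, so that Proposition~\ref{prop:Ardila} and Theorem~\ref{thm:inequalities} apply: two bases of $M$ are joined by an edge of $P_M$ only if they differ by a single basis exchange, so every edge of $P_M$ points in a direction $e_i-e_j$, and Theorem~\ref{thm:edgedirections} gives $P_M\in\mathcal{P}_d$. Writing $P_M=P(\{z_I\})$ as in Theorem~\ref{thm:inequalities}, the maximal (tight) right-hand sides are $z_I=\min_{x\in P_M}x(I)$; since $P_M\subseteq\{x:x(E)=r(E)\}$ and $\max_{x\in P_M}x(S)=r(S)$ for every $S\subseteq E$ (realized by the indicator vector of a basis meeting $S$ in $r(S)$ elements), we obtain
\[
z_I \ = \ r(E)-\max_{x\in P_M}x(E\setminus I) \ = \ r(E)-r(E\setminus I)
\]
for all $I\subseteq E$, and in particular $z_\emptyset=0$ as required by the normalization.

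Next, Proposition~\ref{prop:Ardila} gives $y_I=\sum_{J\subseteq I}(-1)^{|I|-|J|}z_J$ for the expansion $P_M=\sum_{I\subseteq E}y_I\Delta_I$. For $I\neq\emptyset$ the constant summand contributes $r(E)\sum_{J\subseteq I}(-1)^{|I|-|J|}=0$, so $y_I=-\sum_{J\subseteq I}(-1)^{|I|-|J|}r(E\setminus J)$. Substituting $K=I\setminus J$ (so that $E\setminus J=(E\setminus I)\cup K$ as a disjoint union and $|I|-|J|=|K|$) and then using $r\big((E\setminus I)\cup K\big)=r_{M/(E\setminus I)}(K)+r(E\setminus I)$, the constant $r(E\setminus I)$ again cancels since $\sum_{K\subseteq I}(-1)^{|K|}=0$ for $I\neq\emptyset$, leaving
\[
y_I \ = \ -\sum_{K\subseteq I}(-1)^{|K|}\,r_{M/(E\setminus I)}(K).
\]
Comparing with the definition of $\beta$ applied to the matroid $M/(E\setminus I)$, whose ground set is $I$ and whose rank is $r(E)-r(E\setminus I)=r\big(M/(E\setminus I)\big)$, this equals $-(-1)^{r(M/(E\setminus I))}\beta\big(M/(E\setminus I)\big)=\tilde{\beta}\big(M/(E\setminus I)\big)$. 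Reindexing by $A=E\setminus I$, and noting that the value $A=E$ contributes $\tilde{\beta}(M/E)=0=y_\emptyset$ consistent with $\Delta_\emptyset=\{0\}$, yields $P_M=\sum_{A\subseteq E}\tilde{\beta}(M/A)\Delta_{E-A}$.

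The only non-formal ingredient is the identification $z_I=r(E)-r(E\setminus I)$, i.e. the standard description of the matroid polytope by rank inequalities (Edmonds); after that everything is sign- and reindexing-bookkeeping. The point where care is needed is keeping the parity conventions for $\beta$ and $\tilde{\beta}$ aligned with the rank shift $r\big(M/(E\setminus I)\big)=r(E)-r(E\setminus I)$, so that the constant terms cancel precisely when $I\neq\emptyset$ and all signs line up correctly.
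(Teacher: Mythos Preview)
The paper does not prove this proposition; it is quoted as a result of Ardila--Benedetti--Doker~\cite{Ardila}. Your argument is correct and is essentially the one given there: one uses Edmonds' rank description of $P_M$ to read off the tight values $z_I=r(E)-r(E\setminus I)$, applies the M\"obius inversion of Proposition~\ref{prop:Ardila} to obtain the coefficients $y_I$, and simplifies via the contraction rank formula $r\big((E\setminus I)\cup K\big)=r_{M/(E\setminus I)}(K)+r(E\setminus I)$ to recognize $y_I=\tilde\beta\big(M/(E\setminus I)\big)$.
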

As a  consequence of Theorem~\ref{thm:Ehrhartpositive} together with Proposition~\ref{prop:matroids} and recalling that $E_1(\Delta _i)=1+\frac{1}{2}+\cdots +\frac{1}{i-1}$ we obtain the following inequality for signed beta invariants of contractions.
\begin{corollary}\label{cor:matroids}
Let $M$ be a matroid with groundset $E$. Then
\[
\sum _{A\subseteq E} h_{|E-A|-1}\tilde{\beta}(M/A)  \geq \ 0 \, ,
\]
where $h_i:=1+\frac{1}{2}+\cdots + \frac{1}{i}$.
\end{corollary}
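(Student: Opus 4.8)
The plan is to combine the Minkowski-sum decomposition of the matroid polytope provided by Proposition~\ref{prop:matroids} with the linearity of the functional $\mathcal{E}$ and the positivity statement of Theorem~\ref{thm:Ehrhartpositive}. First I would recall that $\mathcal{E}$ is a symmetric, translation-invariant Minkowski linear functional on $\mathcal{P}_d$ (with $d=|E|$), and that by Proposition~\ref{prop:agreesEhr} it agrees with the linear Ehrhart coefficient $E_1$ on all generalized permutahedra that are lattice polytopes; in particular it agrees with $E_1$ on every matroid polytope, since matroid polytopes are $0/1$-polytopes and hence lattice polytopes. Thus it suffices to evaluate $\mathcal{E}(P_M)$.

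Next I would apply Proposition~\ref{prop:matroids} to write $P_M=\sum_{A\subseteq E}\tilde\beta(M/A)\,\Delta_{E-A}$ as a signed Minkowski sum, and use Minkowski linearity of $\mathcal{E}$ (extended to signed sums, exactly as in the proof of Proposition~\ref{prop:agreesEhr} via Lemma~\ref{lem:linearityEhr}) together with symmetry to obtain
\[
\mathcal{E}(P_M)=\sum_{A\subseteq E}\tilde\beta(M/A)\,\mathcal{E}(\Delta_{E-A})=\sum_{A\subseteq E}\tilde\beta(M/A)\,\mathcal{E}(\Delta_{|E-A|}).
\]
Here I use that $\mathcal{E}$ is symmetric, so $\mathcal{E}(\Delta_{E-A})$ depends only on the cardinality $|E-A|$, and that $\mathcal{E}(\Delta_i)=h_{i-1}=1+\tfrac12+\cdots+\tfrac1{i-1}$ by definition of $\mathcal{E}$. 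Therefore $\mathcal{E}(\Delta_{E-A})=h_{|E-A|-1}$, and the displayed sum becomes exactly $\sum_{A\subseteq E} h_{|E-A|-1}\,\tilde\beta(M/A)$.

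Finally I would invoke Theorem~\ref{thm:Ehrhartpositive}, which asserts $\mathcal{E}(P)\geq 0$ for every generalized permutahedron $P\in\mathcal{P}_d$; since $P_M$ is a generalized permutahedron, we conclude $\sum_{A\subseteq E} h_{|E-A|-1}\,\tilde\beta(M/A)=\mathcal{E}(P_M)\geq 0$, as claimed. The one point requiring a small amount of care — and the only place where anything could go wrong — is the bookkeeping of indices and conventions: making sure that the empty simplex $\Delta_\emptyset=\{0\}$ (arising from $A=E$) contributes $0$ consistently with $h_{-1}$ or is simply excluded, and that the normalization $\mathcal{E}(\Delta_{i+1})=h_i$ matches $\mathcal{E}(\Delta_i)=h_{i-1}$; none of this is a genuine obstacle, merely a re-indexing that the statement of the corollary has already absorbed into the notation $h_{|E-A|-1}$.
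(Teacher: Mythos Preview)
Your argument is correct and is precisely the paper's approach: the corollary is stated as an immediate consequence of Theorem~\ref{thm:Ehrhartpositive}, Proposition~\ref{prop:matroids}, and the identity $E_1(\Delta_i)=h_{i-1}$, combined exactly as you describe via the Minkowski linearity and symmetry of $\mathcal{E}$. Your remark on the bookkeeping for $A=E$ is the only additional care needed, and it is harmless since $\Delta_\emptyset=\{0\}$ and $\mathcal{E}(\{0\})=0$.
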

The \textbf{independent set polytope} $I_M$ of a matroid $M$ is defined as the convex hull of indicator functions of all independent sets of $M$. For $I\subseteq E$ let 
\[
D_I \ = \ \conv( \{0\}\cup \{e_i \colon i\in I\}) \, .
\]
In \cite{Ardila} these simplices were used to express the matroid independence polytope as a signed Minkowski sum.
\begin{proposition}[{\cite{Ardila}}]\label{prop:matroidsindep}
Let $M$ be a matroid of rank $r$ on $E$ and let $I_M$ be its independent set polytope. Then
\[
I_M \ = \ \sum _{A\subseteq E} \tilde{\beta}(M/A)D _{E-A} \, .
\]
\end{proposition}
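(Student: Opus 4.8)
The plan is to realize $I_M$ as the image, under a coordinate projection, of a generalized permutahedron living one dimension up, whose signed Minkowski decomposition into simplices we already control via Proposition~\ref{prop:matroids}. Introduce a new index $0$, set $\hat{E}=E\cup\{0\}$, and let $\pi\colon\mathbb{R}^{\hat{E}}\to\mathbb{R}^E$ be the linear map forgetting the $0$-th coordinate. Two elementary facts drive everything: first, being linear, $\pi$ carries Minkowski sums to Minkowski sums and hence signed Minkowski sums (Minkowski differences) to signed Minkowski sums; second, since $\pi(e_0)=0$ and $\pi(e_i)=e_i$ for $i\in E$, we have $\pi(\Delta_{J\cup\{0\}})=\conv(\{0\}\cup\{e_i\colon i\in J\})=D_J$ for every $J\subseteq E$ (and $\pi(\Delta_J)=\Delta_J$ for $J\subseteq E$).

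Next I would build the lift. Let $r=r(M)$ and define a vector $\{z_J\}_{J\subseteq\hat{E}}$ by $z_J=0$ for $J\subseteq E$ and $z_{J\cup\{0\}}=r-r(E\setminus J)$ for $J\subseteq E$; in particular $z_\emptyset=0$ and $z_{\hat{E}}=r$. Set $\hat{I}_M:=P(\{z_J\})\subseteq\mathbb{R}^{\hat{E}}$. A direct case check shows that supermodularity of $\{z_J\}$ is equivalent to submodularity of the rank function $r$ of $M$ (the only nontrivial case, with $0$ in both sets, is literally the submodular inequality for $r$ on complements), so by Theorem~\ref{thm:inequalities} the polytope $\hat{I}_M$ is a generalized permutahedron. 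Rewriting the defining inequalities of $I_M=\{x\ge 0\colon \sum_{i\in S}x_i\le r(S)\text{ for all }S\subseteq E\}$ as lower bounds on complements and on the slack $x_0=r-\sum_{i\in E}x_i$ shows that $\pi(\hat{I}_M)=I_M$. The key observation is that the values $z_{J\cup\{0\}}=r-r(E\setminus J)$ are precisely the coefficients $z_J^{P_M}$ in the facet description $\sum_{i\in J}x_i\ge r-r(E\setminus J)$ of the matroid polytope $P_M$.

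Finally, apply Proposition~\ref{prop:Ardila} to $\hat{I}_M$. Because $z_J=0$ whenever $0\notin J$, Möbius inversion annihilates the coefficient of every $\Delta_J$ with $J\subseteq E$, while for $J\cup\{0\}$ it yields
\[
\hat{y}_{J\cup\{0\}}\ =\ \sum_{K\subseteq J}(-1)^{|J|-|K|}z_{K\cup\{0\}}\ =\ \sum_{K\subseteq J}(-1)^{|J|-|K|}z_K^{P_M}\ =\ y_J^{P_M}\ =\ \tilde{\beta}(M/(E\setminus J)),
\]
the last equality being Proposition~\ref{prop:matroids}. Hence $\hat{I}_M=\sum_{J\subseteq E}\tilde{\beta}(M/(E\setminus J))\,\Delta_{J\cup\{0\}}$ as a signed Minkowski sum; applying $\pi$ and using the two facts from the first paragraph gives
\[
I_M\ =\ \pi(\hat{I}_M)\ =\ \sum_{J\subseteq E}\tilde{\beta}(M/(E\setminus J))\,D_J\ =\ \sum_{A\subseteq E}\tilde{\beta}(M/A)\,D_{E-A},
\]
as claimed. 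The step that needs genuine (if routine) care is establishing that the lift $\hat{I}_M$ really is a generalized permutahedron and that $\pi(\hat{I}_M)=I_M$ with exactly the stated $z$-vector; an alternative to the supermodularity check is to verify directly via Theorem~\ref{thm:edgedirections} that the edge directions of $\hat{I}_M$ are all of the form $e_a-e_b$ (the ``add an element'' edges of $I_M$, of direction $e_i$, lift to $e_i-e_0$, while ``exchange'' edges stay $e_i-e_j$). Everything after that is bookkeeping with Möbius inversion plus the two cited propositions.
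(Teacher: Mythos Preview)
Your proof is correct. Note, however, that the paper does not give its own proof of this proposition: it is quoted from \cite{Ardila}, so strictly speaking there is nothing to compare against in the paper itself. That said, the lifting device you use---embedding $\mathbb{R}^E$ into $\mathbb{R}^{E\cup\{0\}}$ by sending $0\mapsto e_0$ so that $D_J$ becomes the standard simplex $\Delta_{J\cup\{0\}}$---is precisely the transformation the paper invokes in the proof of the subsequent Corollary~\ref{cor:matroidsindep}. The paper uses that lift only to transport the already-cited identity into the generalized-permutahedron setting; you go one step further and use the lift to \emph{derive} the identity from Proposition~\ref{prop:matroids}, which is a pleasant economy.

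A couple of small points worth making explicit in a final write-up. First, Proposition~\ref{prop:Ardila} as stated in the paper requires the $z_J$ to be tight; you should record that for $J\subseteq E$ the constraint $\sum_{i\in J}x_i\ge 0$ is attained at the origin (the empty independent set), and for $J'\cup\{0\}$ tightness follows by taking any maximal independent subset of $E\setminus J'$. Second, the assertion that a linear map sends signed Minkowski sums to signed Minkowski sums deserves one line: if $R+Q=P$ then $\pi(R)+\pi(Q)=\pi(P)$, so $\pi(Q)$ is a summand of $\pi(P)$ and $\pi(P)-\pi(Q)=\pi(R)$. With those two sentences added, the argument is complete and self-contained modulo Proposition~\ref{prop:matroids}.
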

\begin{corollary}\label{cor:matroidsindep}
Let $M$ be a matroid with groundset $E$. Then
\[
\sum _{A\subseteq E} h_{|E-A|}\tilde{\beta}(M/A)  \geq \ 0 \, ,
\]
where $h_i:=1+\frac{1}{2}+\cdots + \frac{1}{i}$.
\end{corollary}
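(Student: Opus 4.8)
The plan is to mimic the proof of Corollary~\ref{cor:matroids}. The obstruction to copying it verbatim is that, unlike the matroid polytope $P_M$, the independent set polytope $I_M\subseteq\mathbb{R}^E$ is \emph{not} a generalized permutahedron: it has edges in the coordinate directions $e_i$ and not only in directions of the form $e_i-e_j$. To remedy this I would lift $I_M$ to a generalized permutahedron one dimension higher. Adjoin an auxiliary ground-set element $0$ and let $\pi_0\colon\mathbb{R}^{E\cup\{0\}}\to\mathbb{R}^E$ be the linear projection forgetting the coordinate indexed by $0$, so $\pi_0(e_0)=\mathbf{0}$, $\pi_0(e_i)=e_i$ for $i\in E$, and hence $\pi_0\big(\Delta_{I\cup\{0\}}\big)=D_I$ for every $I\subseteq E$. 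Since each $\Delta_{I\cup\{0\}}$ lies in the hyperplane $\{x\in\mathbb{R}^{E\cup\{0\}}\colon\sum_{v\in E\cup\{0\}}x_v=1\}$, the restriction of $\pi_0$ to each hyperplane $H_s=\{x\colon\sum_{v\in E\cup\{0\}}x_v=s\}$ is an affine bijection onto $\mathbb{R}^E$, while $\pi_0$ itself is linear and so commutes with Minkowski sums.

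The heart of the argument is to show that $\widetilde{I_M}:=\sum_{A\subseteq E}\tilde\beta(M/A)\,\Delta_{(E-A)\cup\{0\}}$ is an honest Minkowski difference defining a generalized permutahedron in $\mathcal{P}_{|E|+1}$. Write $\widetilde P=\sum_{A\colon\tilde\beta(M/A)\ge 0}\tilde\beta(M/A)\,\Delta_{(E-A)\cup\{0\}}$ and $\widetilde Q=\sum_{A\colon\tilde\beta(M/A)<0}\big(-\tilde\beta(M/A)\big)\,\Delta_{(E-A)\cup\{0\}}$ for the positive and the negative parts of this formal sum; each is a Minkowski sum of dilated standard simplices, hence a generalized permutahedron, and they lie in $H_{s_P}$ and $H_{s_Q}$ for the corresponding nonnegative coefficient sums $s_P,s_Q$. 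Applying $\pi_0$ and using Proposition~\ref{prop:matroidsindep}, $\pi_0(\widetilde P)=P'$ and $\pi_0(\widetilde Q)=Q'$, where $P'$ and $Q'$ are the positive and negative parts of the decomposition $I_M=\sum_A\tilde\beta(M/A)D_{E-A}$; in particular $Q'$ is a Minkowski summand of $P'$ with $P'=I_M+Q'$. Let $\widetilde{I_M}$ be the unique element of $H_{s_P-s_Q}$ with $\pi_0(\widetilde{I_M})=I_M$, which exists because $\pi_0$ is bijective on $H_{s_P-s_Q}$. Then $\widetilde{I_M}+\widetilde Q$ and $\widetilde P$ both lie in $H_{s_P}$ and have the same $\pi_0$-image $P'$, so by injectivity of $\pi_0$ on $H_{s_P}$ we get $\widetilde P=\widetilde{I_M}+\widetilde Q$; that is, $\widetilde{I_M}=\widetilde P-\widetilde Q$, the claimed signed Minkowski sum. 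Being a Minkowski summand of the generalized permutahedron $\widetilde P$, the polytope $\widetilde{I_M}$ is itself a generalized permutahedron (a Minkowski summand of a Minkowski summand of $\lambda\Pi_{|E|+1}$ is a Minkowski summand of $\lambda\Pi_{|E|+1}$); alternatively, $\widetilde{I_M}=\pi_0^{-1}(I_M)$ and lifting the edge directions $e_i$, $e_i-e_j$ of $I_M$ gives only directions $e_a-e_b$ with $a,b\in E\cup\{0\}$, so Theorem~\ref{thm:edgedirections} applies.

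It then suffices to evaluate the symmetric, translation-invariant Minkowski linear functional $\mathcal{E}$ on $\mathcal{P}_{|E|+1}$ at $\widetilde{I_M}$, exactly as in Corollary~\ref{cor:matroids}. By symmetry $\mathcal{E}\big(\Delta_{(E-A)\cup\{0\}}\big)$ depends only on $|(E-A)\cup\{0\}|=|E-A|+1$ and hence equals $h_{|E-A|}$, so Minkowski linearity on the signed sum gives
\[
\mathcal{E}(\widetilde{I_M}) \ = \ \sum_{A\subseteq E}\tilde\beta(M/A)\,\mathcal{E}\big(\Delta_{(E-A)\cup\{0\}}\big) \ = \ \sum_{A\subseteq E}h_{|E-A|}\,\tilde\beta(M/A),
\]
and Theorem~\ref{thm:Ehrhartpositive} yields $\mathcal{E}(\widetilde{I_M})\ge 0$, which is the assertion. (Moreover $\widetilde{I_M}$ is a lattice polytope lying in $H_c$ with $c=\sum_A\tilde\beta(M/A)\in\mathbb{Z}$, and $\pi_0$ restricts to a bijection $H_{nc}\cap\mathbb{Z}^{E\cup\{0\}}\to\mathbb{Z}^E$ for every $n\ge 1$; hence $\widetilde{I_M}$ and $I_M$ have equal Ehrhart polynomials and the displayed quantity is in fact $E_1(I_M)$ by Proposition~\ref{prop:agreesEhr}.)

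I expect the only delicate point to be the lifting bookkeeping of the second paragraph: one must make sure that the formal sum $\widetilde{I_M}$ is a genuine Minkowski difference in $\mathbb{R}^{E\cup\{0\}}$ and that this difference projects, under $\pi_0$, onto the decomposition of $I_M$ from Proposition~\ref{prop:matroidsindep}. Once the relevant polytopes are pinned down in the correct hyperplanes and the bijectivity of $\pi_0$ there is exploited, everything else --- the values $\mathcal{E}(\Delta_{k})=h_{k-1}$, the linearity of $\mathcal{E}$ on signed Minkowski sums, and the positivity input from Theorem~\ref{thm:Ehrhartpositive} --- is a line-by-line analogue of the matroid-polytope case.
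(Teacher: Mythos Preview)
Your proposal is correct and takes essentially the same approach as the paper: both lift $I_M$ to $\mathbb{R}^{|E|+1}$ via the affine identification $0\mapsto e_{|E|+1}$, $e_i\mapsto e_i$, turning each $D_I$ into the standard simplex $\Delta_{I\cup\{|E|+1\}}$ and $I_M$ into a generalized permutahedron, and then apply Theorem~\ref{thm:Ehrhartpositive}. The paper's proof is a terse two sentences, whereas you have carefully unpacked the hyperplane bookkeeping that justifies why the signed Minkowski decomposition survives the lift---this extra care is not a different method, just a rigorous spelling-out of what the paper asserts in passing.
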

\begin{proof}
After a lattice preserving affine transformation $\mathbb{R}^{|E|}\rightarrow \mathbb{R}^{|E|+1}$, $e_i\mapsto e_i$, $0\mapsto e_{|E|+1}$, $I_M$ is a generalized permutahedron and $D_I$ are standard simplices. The proof follows then from Theorem~\ref{thm:Ehrhartpositive}.
\end{proof}

\subsection{Solid angles}
We conclude by applying our results of the previous chapters to a close relative of the Ehrhart polynomial, the solid angle polynomial of a lattice polytope. Let $q\in \mathbb{R}^d$ be a point, $P\subseteq \mathbb{R}^d$ be a polytope and let $\mathcal{B}_\epsilon (q)$ denote the ball with radius $\epsilon$ centered at $q$. The \textbf{solid angle} of $q$ with respect to $P$ is defined by
\[
\omega _q (P) \ = \ \lim _{\epsilon \rightarrow 0} \frac{\vol (P\cap \mathcal{B}_\epsilon (q) )}{\vol \mathcal{B}_\epsilon (q)} \, .
\]
We note that the function $q\mapsto \omega _q (P)$ is constant on relative interiors of the faces of $P$. In particular, if $q\not \in P$ then $\omega _q (P) = 0$, if $q$ is in the interior of $P$ then $\omega _q (P)=1$ and if $q$ lies inside the relative interior of a facet then $\omega _q (P)=\frac{1}{2}$. The \textbf{solid angle sum} of $P$ is defined by 
\[
A(P) \ = \ \sum _{q\in \mathbb{Z}^d} \omega _q (P) \, .
\]
By an analog of Ehrhart's Theorem (Theorem~\ref{thm:Ehrhart}) for solid-angle sums due to Macdonald~\cite{Macdonald} $A(P)=A_0(P)+A_1(P)n+\cdots +A_d(P)n^d$ is a polynomial for all lattice polytopes $P$. This follows also from the Bernstein-McMullen Theorem~\ref{thm:Bernstein-McMullen} since $A(P)$ is a translation-invariant valuation (see, e.g., \cite{positivitysolid}). Indeed, since $\omega _p(P)$ is constant on relative interiors of faces
\begin{equation}\label{eq:solidangle}
A(nP) \ = \ \sum _{F\subseteq P}\sum _{q\in \relint F\cap \mathbb{Z}^d} \omega _q (nP) = \sum _{F\subseteq P}\omega _F (P)E_{\relint F}(n) \, ,
\end{equation}
where the first sum is over all faces $F$ of $P$, $\omega _F(P)$ is the solid angle of a point in the relative interior of $F$ and $E_{\relint F}(n)=|\relint nF \cap \mathbb{Z}^d|$ is the Ehrhart polynomial of the relative interior of $F$ (see \cite[Lemma 13.2]{BeckRobins}). For lattice polygons $P$ in $\mathbb{R}^2$, the solid-angle sum $A(P)$ agrees with the area, $\area (P)$, of the polygon~\cite[Corollary 13.11]{BeckRobins}. In particular, $A(nP)=\area (P)n^2$ has only nonnegative coefficients. As in the case of Ehrhart polynomials, for polytopes $P$ of higher dimension the coefficients $A_i(P)$ can be negative in general~\cite[Proposition 1]{positivitysolid}, even in dimension $3$. We supplement this result by showing that for the class of generalized permutahedra, unlike the case of Ehrhart polynomials, the linear terms of solid angle polynomials can be negative. Here, we view generalized permutahedra in $\mathcal{P}_d$ as polytopes in $\{\mathbf{x}\in \mathbb{R}^d\colon \sum x_i=\ell\}$ for some $\ell\in\mathbb{Z}$. 
\begin{proposition} Let $Q\in \mathcal{P}_4$ be the $3$-dimensional generalized permutahedron defined by
\[Q = \underset{\stackrel{I \subseteq [4]}{|I|=2}}{\sum} \Delta_{I} -  \Delta_{4} \, .\]
Then $A_1 (Q)<0$. In particular, there is a $3$-dimensional generalized permutahedron in $\mathbb{R}^4$ such that the linear term of its solid angle polynomial is negative. 
\end{proposition}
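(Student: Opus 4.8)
The plan is to compute $A_1(Q)$ from the face decomposition \eqref{eq:solidangle}, $A(nQ)=\sum_F\omega_F(Q)\,E_{\relint F}(n)$, and to show the resulting number is (narrowly) negative. Extracting the coefficient of $n$ from that identity, only faces of positive dimension contribute: the full-dimensional face $Q$ contributes $E_1(Q)$; a facet $F$ contributes $\omega_F(Q)=\tfrac12$ times the coefficient of $n$ in $E_{\relint F}$, which by Ehrhart--Macdonald reciprocity and Pick's theorem equals $-\tfrac12\,|\partial F\cap\mathbb{Z}^4|$; and an edge $e$ contributes $\omega_e(Q)=\tfrac{\theta_e}{2\pi}$ times the coefficient of $n$ in $E_{\relint e}$, namely the lattice length $\ell_e$. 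So I need the facet data, the edge data, the dihedral angles $\theta_e$, and $E_1(Q)$.

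First I would pin down the combinatorics of $Q$. Running the coefficients $y_{\{i,j\}}=1$ (for all six pairs), $y_{[4]}=-1$, all other $y_I=0$, through the M\"obius inversion $z_I=\sum_{J\subseteq I}y_J$ of Proposition~\ref{prop:Ardila} gives $z_{\{i\}}=0$, $z_{\{i,j\}}=1$, $z_{\{i,j,k\}}=3$, $z_{[4]}=5$; each inequality \eqref{eq:supermodtrans} then evaluates to $1$ (if $|T|\le 3$) or $0$ (if $T=[4]$), so $Q$ is indeed a generalized permutahedron by Theorem~\ref{thm:chargenperm}, with $Q=\{x\in\mathbb{R}^4:\sum_i x_i=5,\ x_i\ge 0,\ x_i+x_j\ge 1,\ x_i+x_j+x_k\ge 3\}$ and vertex set the $12$ permutations of $(0,1,2,2)$. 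Checking which inequalities are facet-defining, I expect $Q$ to have $8$ facets: the four unimodular triangles $Q\cap\{x_i=0\}$ (whose only lattice points are their three vertices) and the four hexagons $Q\cap\{x_\ell=2\}$ (each with $6$ lattice points on its boundary and one in its interior); and $18$ edges, each a primitive lattice segment in a direction $e_i-e_j$, of which $6$ separate two hexagons and $12$ separate a triangle from a hexagon.

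With this in hand, the facets contribute $-\tfrac14(4\cdot 6+4\cdot 3)=-9$, and the edges contribute $\tfrac1{2\pi}\sum_e\theta_e$ since every $\ell_e=1$. For the dihedral angles I would project the outer facet normals into the linear hyperplane $\{\sum x_i=0\}$, obtaining $e_i-\tfrac14(1,1,1,1)$ at a facet $\{x_i=2\}$ and its negative at $\{x_i=0\}$; these have squared norm $\tfrac34$ and pairwise inner product $\pm\tfrac14$, so two hexagons meet at interior dihedral angle $\arccos\tfrac13$ and a triangle meets a hexagon at $\arccos(-\tfrac13)=\pi-\arccos\tfrac13$. Hence the edge sum is $\tfrac1{2\pi}\bigl(6\arccos\tfrac13+12(\pi-\arccos\tfrac13)\bigr)=6-\tfrac3\pi\arccos\tfrac13$. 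Finally $E_1(Q)=\mathcal E(Q)$ by Proposition~\ref{prop:agreesEhr}, and Minkowski-linearity together with symmetry give $\mathcal E(Q)=6\,\mathcal E(\Delta_2)-\mathcal E(\Delta_4)=6\cdot 1-\tfrac{11}{6}=\tfrac{25}{6}$. Adding these up,
\[
A_1(Q)=\frac{25}{6}-9+\Bigl(6-\frac{3}{\pi}\arccos\frac{1}{3}\Bigr)=\frac{7}{6}-\frac{3}{\pi}\arccos\frac{1}{3}.
\]

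It remains to show $A_1(Q)<0$, that is $\arccos\tfrac13>\tfrac{7\pi}{18}$, equivalently $\cos\tfrac{7\pi}{18}>\tfrac13$. I would get this from the triple-angle identity: writing $c=\cos\tfrac{7\pi}{18}\in(0,\tfrac12)$ one has $4c^3-3c=\cos\tfrac{7\pi}{6}=-\tfrac{\sqrt3}{2}$, while $t\mapsto 4t^3-3t$ is strictly decreasing on $[0,\tfrac12]$ and equals $-\tfrac{23}{27}>-\tfrac{\sqrt3}{2}$ at $t=\tfrac13$ (since $46^2=2116<2187=3\cdot 27^2$); hence $c>\tfrac13$ and $A_1(Q)<0$. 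The main obstacle is exactly this razor-thin margin ($A_1(Q)\approx-0.009$): the computation must be carried through carefully and exactly, the delicate points being the orientation of the facet normals --- so that the hexagon--hexagon dihedral angle is the \emph{acute} $\arccos\tfrac13$, reflecting that $Q$ is dented inward relative to the permutahedron $\Pi_4$ (a translate of $\sum_{|I|=2}\Delta_I$) --- and the replacement of any decimal estimate of $\arccos\tfrac13$ by the exact comparison with $\tfrac{7\pi}{18}$ above.
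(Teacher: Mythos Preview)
Your computation is correct and arrives at exactly the same numerical value $A_1(Q)=\tfrac{7}{6}-\tfrac{3}{\pi}\arccos\tfrac{1}{3}$ as the paper, but the route is genuinely different. The paper does \emph{not} analyze the polytope $Q$ at all: it observes (via the Bernstein--McMullen theorem) that $A_1$, like $E_1$, is Minkowski additive, and that $A(\,\cdot\,)\equiv 0$ on polytopes of dimension below $3$, so $A_1(\Delta_I)=0$ for every $|I|\le 3$. Hence $A_1(Q)=-A_1(\Delta_4)$, and only the face decomposition of the standard tetrahedron $\Delta_4$ is needed; plugging in its well-known dihedral angle $\arccos\tfrac{1}{3}$ gives $A_1(\Delta_4)=\tfrac{3}{\pi}\arccos\tfrac{1}{3}-\tfrac{7}{6}$, and the sign is then asserted numerically.

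Your approach trades that one-line reduction for a full combinatorial description of $Q$ (twelve vertices, four triangular and four hexagonal facets, eighteen primitive edges, and both dihedral angles), followed by assembling $E_1(Q)$, the facet terms, and the edge terms. What you gain is that you never invoke Minkowski additivity of $A_1$; what you lose is brevity, since the paper's argument avoids all of the vertex and facet bookkeeping. One further point in your favor: you replace the paper's decimal estimate by an exact inequality $\arccos\tfrac{1}{3}>\tfrac{7\pi}{18}$ proved via the triple-angle identity, which makes the sign conclusion fully rigorous. Both arguments are valid; the paper's is shorter, yours is more self-contained and analytically complete.
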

\begin{proof}
It is easy to check that the coefficients in the signed Minkowski sum by which $Q$ is given satisfy the inequalities~\eqref{eq:supermodtrans}, and therefore, by Theorem~\ref{thm:chargenperm}, $Q$ is a generalized permutahedron.
 Since the solid-angle sum is a translation-invariant valuation and by observing that Lemma~\ref{lem:linearityEhr} and its proof in \cite{MinkowskiValuations} via the Bernstein-McMullen Theorem~\ref{thm:Bernstein-McMullen} carries over verbatim to translation-invariant valuations we see that the linear term $A_1(P)$ is Minkowski additive.
By definition, $A(P)=0$ whenever $\dim P<3$ and therefore $A_1(\Delta_{I})=0$ for all $I\subseteq [4]$ with $|I|<4$. In particular, by Minkowski additivity, we have $A_1(Q)=-A_1(\Delta _4)$. It thus suffices to prove $A_1(\Delta _4)>0$. By \eqref{eq:solidangle},
\begin{eqnarray}
A(n\Delta _4) \ &=& \ \alpha E_{\relint \Delta _4}+ 4\beta E_{\relint \Delta _3}+6\gamma E_{\relint \Delta _2}+4\delta E_{\relint \Delta _1}\\
\ &=& \ \alpha {n-1\choose 3}+ 4\beta {n-1\choose 2 }+6\gamma (n-1)+4\delta \, ,
\end{eqnarray}
where $\alpha,\beta,\gamma,\delta$ denote the solid angle of $\Delta _4$ at a lattice point in the interior, on a facet, on an edge and at a vertex, respectively. Inserting the values $\alpha =1$, $\beta=\frac{1}{2}$ and $\gamma =\frac{\cos^{-1}(\frac{1}{3})}{2\pi}$ (see, e.g., \cite{Coxeter1973})  we obtain
\[
A_1(P) \ = \ \frac{3}{\pi}\cos^{-1}(\frac{1}{3})-\frac{7}{6}\cong 0.00881298... >0  \, 
\]
as desired. This completes the proof.
\end{proof}

\textbf{Acknowledgements:} The authors are grateful to the anonymous referees for their careful reading of the manuscript and many insightful remarks that improved this paper. Furthermore, they would like to thank the Simons Institute of the Theory of Computing, Berkeley, for hosting the semester program ``Geometry of Polynomials'' in spring 2019 during which this project evolved. The first author was funded by a Microsoft Research Fellowship during the semester program. She was also partially supported by the Wallenberg AI, Autonomous Systems and Software Program (WASP) funded by the Knut and Alice Wallenberg
Foundation and by grant 2018-03968 from the Swedish Research Council. The second author was supported by BAP 2018-16 from the Mimar Sinan Fine Arts University and a 2018-20 BAGEP award from Bilim Akademisi. 
\\

\bibliographystyle{abbrv}
\bibliography{final}

\end{document}